\newtheorem{definition}{Definition}
\newtheorem{lemma}[definition]{Lemma}
\newtheorem{proposition}[definition]{Proposition}
\newtheorem{conjecture}[definition]{Conjecture}
\newtheorem{theorem}[definition]{Theorem}
\newcommand{\N}{\mathbb{N}}
\newcommand{\Z}{\mathbb{Z}}
\newcommand{\til}{\widetilde}
\newcommand{\Id}{\mathrm{Id}}
\newcommand{\Lst}{\textsc{Lst}}
\newcommand{\Fst}{\textsc{Fst}}
\newcommand{\Stab}{\mathrm{Stab}}
\newcommand{\alphabet}{\mathcal A}
\newcommand{\bu}{{\bf u}}
\newcommand{\bx}{{\bf x}}
\newcommand{\bw}{{\bf w}}
\renewcommand{\L}{\mathcal L}
\newcommand{\Fact}{\L}
\newcommand{\emptyword}{\varepsilon}
\renewcommand{\P}{\mathcal P}
\begin{document}

\title{A counterexample to a question of Hof, Knill and Simon}
\author{{\sc S\'ebastien Labb\'e\thanks{With the support of NSERC (Canada)}}\\  \\
\small LIAFA,\\ [-0.6ex]
\small Universit\'e Paris Diderot,\\ [-0.6ex]
\small Paris 7 - Case 7014,\\ [-0.6ex]
\small F-75205 Paris Cedex 13\\
\small \tt labbe@liafa.univ-paris-diderot.fr}
\date{}

\maketitle

\begin{abstract}
In this article, we give a negative answer to a question of Hof, Knill and
Simon (1995) concerning purely morphic sequences obtained from primitive
morphism containing an infinite number of palindromes. Proven for the binary
alphabet by B. Tan in 2007, we show the existence of a counterexample on the
ternary alphabet.
\end{abstract}

\section{Introduction}

A morphism of monoid $\varphi:\alphabet^*\to\alphabet^*$ is in \emph{class
$\P$} if there exists a palindrome $p$ and for every $\alpha \in \alphabet$
there exists a palindrome $q_\alpha$ such that $\varphi(\alpha)=pq_\alpha$
\cite{MR1372804}.  If $\bx$ is an infinite sequence generated by
a morphism in class $\P$, then Hof, Knill and Simon proved
that $\bx$ contains an infinite number of palindrome factors.
Such morphisms allowed them to derive some discrete Schr\"odinger
operators having purely singular continuous spectrum.
For example, on the binary alphabet $\alphabet = \{a,b\}$, the morphism $\rho : a
\mapsto bbaba, b \mapsto bba$ is in class $\P$. The morphism $\theta : a \mapsto ab,
b \mapsto ba$ is not in class $\P$, but its square $\theta^2 : a \mapsto abba, b
\mapsto baab$ is in class $\P$. Therefore, the fixed point
\[
\bx_{\rho} =
\rho(\bx_{\rho}) = bbabbabbababbabbabbababbabbabbababbabbab\cdots
\]
and the fixed point, also known as the Thue-Morse word,
\[
\bx_\theta =
\theta(\bx_\theta) = abbabaabbaababbabaababbaabbabaabbaababba\cdots
\]
are both palindromic, i.e., both contain an infinite number of palindromes.
The question of Hof, Knill and Simon that motivates this article concerns the
reciprocal. In their article, they wrote ``Clearly, we could include into
class $\P$ substitutions of the form $s(b) = q_bp$. We do not know whether all
palindromic $x_s$ arise from substitutions that are in this extended class~$\P$."
It turns out that we need to extend a little bit more the class $\P$ by
including also the conjugates. Blondin Mass\'e
(proof available in \cite[Prop 3.5]{labbe_proprietes_2008}) proved that the
primitive morphism $\tau: a \mapsto abbab, b \mapsto abb$
generates a fixed point $\bx_\tau$ having an infinite number of palindromes
such that for all $\varphi\neq\Id$ verifying $\varphi(\bx_\tau)=\bx_\tau$, then
$\varphi$ is not in the above extended class $\P$.
However $\tau$ has a conjugate in class~$\P$. Therefore, the initial question
was reformulated in \cite{labbe_proprietes_2008}.
\begin{conjecture}[Hof, Knill, Simon; Blondin Mass\'e, Labb\'e]\label{conj:hks_corrige}
Let $\bu$ be the fixed point of a primitive morphism. Then,
$\bu$ is palindromic if and only if there exists a morphism $\varphi\neq\Id$
such that $\varphi(\bu)=\bu$ and $\varphi$ has a conjugate in class $\P$.
\end{conjecture}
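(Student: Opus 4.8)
The plan is to prove the two implications of the biconditional separately. The forward (\emph{if}) direction is a short deduction from the Hof--Knill--Simon theorem, while the converse is the substantial part and would proceed by generalising B.\ Tan's binary argument through the interaction of the generating morphism with the reversal operation. For the \emph{if} direction, suppose there is a morphism $\varphi \neq \Id$ with $\varphi(\bu) = \bu$ admitting a conjugate $\psi$ in class $\P$; recall that $\psi$ being a right conjugate of $\varphi$ means there is a word $w$ with $w\,\psi(\alpha) = \varphi(\alpha)\,w$ for every $\alpha \in \alphabet$. First I would verify that conjugate primitive morphisms have fixed points with the same set of factors: iterating the conjugacy relation shows that the fixed point $\bv = \psi(\bv)$ and the fixed point $\bu = \varphi(\bu)$ agree up to a bounded shift, whence $\Fact(\bu) = \Fact(\bv)$. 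Since $\psi \in \P$, the Hof--Knill--Simon theorem gives that $\bv$ contains infinitely many palindrome factors, and because palindromicity is a property of the factor set alone, $\bu$ is palindromic as well. The only point requiring care is to ensure that a non-identity $\varphi$ fixing the aperiodic word $\bu$ is itself primitive with a genuine fixed point, so that this unique-fixed-point reasoning applies to both $\varphi$ and $\psi$.

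For the \emph{only if} direction, assume $\bu$ is palindromic and is the fixed point of the primitive morphism $\sigma$. The goal is to exhibit, among the morphisms fixing $\bu$, one that is conjugate to a morphism in class $\P$. The key observation I would rely on is that class-$\P$ morphisms are characterised, up to conjugacy, by a self-mirror symmetry: writing $\til\sigma(\alpha) = \til{\sigma(\alpha)}$ for the mirror morphism, any $\psi \in \P$ with $\psi(\alpha) = p\,q_\alpha$ and $p,q_\alpha$ palindromes satisfies $p\,\til\psi(\alpha) = p q_\alpha p = \psi(\alpha)\,p$, so $\psi$ is conjugate to its mirror $\til\psi$. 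The plan is therefore to show that palindromicity of $\bu$ forces some power $\sigma^k \neq \Id$ (which still fixes $\bu$) to be conjugate to its own mirror $\til{\sigma^k}$, and then to upgrade this self-mirror conjugacy into an honest conjugate lying in class $\P$. To produce the self-mirror relation I would use that a palindromic fixed point of a primitive morphism possesses arbitrarily long palindromic prefixes; pushing such a prefix through $\sigma^k$ and comparing the image with its reversal should yield both the conjugating word $w$ and the decomposition $\sigma^k(\alpha) = p\,q_\alpha$, via an analysis of the return words and bispecial factors of $\bu$.

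The hard part will be this last step. On the binary alphabet Tan could exploit the rigidity of two-letter return structures and the fact that the reversal of a factor is essentially forced, which pins the morphism down to a single conjugacy class inside $\P$; on a larger alphabet the self-mirror relation is considerably weaker, and the main obstacle is to show that the several letter-specific palindromic suffixes $q_\alpha$ can be realised \emph{simultaneously} against one common palindromic prefix $p$ by a single conjugate of $\sigma^k$. Controlling this simultaneous realisation, through a careful study of the palindromic complexity of $\bu$ and of how reversal acts on its return words, is precisely where I expect the argument to be most delicate, and it is the point at which the ternary situation may diverge from the binary one.
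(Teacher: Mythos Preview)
The proposal attempts to \emph{prove} Conjecture~\ref{conj:hks_corrige}, but the paper does the opposite: it \emph{disproves} it. The statement is a conjecture, not a theorem, and the main result of the paper (Theorem~\ref{thm:main}) exhibits an explicit counterexample over the ternary alphabet $\{a,b,c\}$, namely the fixed point $\bx_\gamma$ of $\gamma:a\mapsto aca,\ b\mapsto cab,\ c\mapsto b$. This sequence is shown to be palindromic (Lemma~\ref{lem:infty}), yet \emph{no} non-identity morphism in $\Stab(\bx_\gamma)$ has a conjugate in class~$\P$ (Proposition~\ref{prop:noconjuclassP}). Hence the ``only if'' direction you are trying to establish is simply false beyond the binary alphabet, and no argument along the lines you sketch can succeed.

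Your ``if'' direction is fine and essentially the classical Hof--Knill--Simon observation (conjugate morphisms generate shift-equivalent fixed points, hence share their language, and class-$\P$ morphisms produce palindromic languages). The failure is entirely in the converse. You yourself flag the weak point: on a larger alphabet the self-mirror conjugacy is ``considerably weaker'' and one must realise all the palindromic suffixes $q_\alpha$ simultaneously against a common prefix~$p$. This is exactly where the argument breaks, and not for lack of cleverness: for $\bx_\gamma$ the paper shows that $\gamma^k(ab)$ and $\gamma^k(ac)$ are both primitive symmetric words but with \emph{different} points of symmetry (Lemma~\ref{lem:points_symmetry}), so no single conjugate can align them into the form $p\,q_\alpha$. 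In short, the obstacle you anticipate is genuine and fatal; the correct response is to construct the counterexample, not to push the binary proof further.
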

Conjecture~\ref{conj:hks_corrige} has an important application on another
question related to palindromic sequences with finite defect stated in
\cite{MR2566171} also considered in \cite{MR2822202} for uniformly recurrent
words and still open for fixed point of primitive and injective morphisms
\cite{bucci_vaslet_2012}.
Partial results have shown that morphisms preserving palindrome complexity must
be in class~$\P$, see \cite{MR2489283} for more details.
The conjecture was answered positively for the periodic case in
\cite{MR1964623} and for the binary alphabet in
\cite{MR2363365}. Its result is stronger than the conjecture, because it shows
that the square of the morphism has a conjugate in class $\P$.
\begin{theorem}{\rm\cite{MR2363365}}\label{thm:binary}
Let $\varphi$ be a primitive morphism on a binary alphabet. Then, the fixed
point of $\varphi$ is palindromic if and only if $\varphi^2$ has a conjugate
in class $\P$.
\end{theorem}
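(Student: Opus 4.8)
The plan is to prove both implications by transporting the palindromic property of the fixed point $\bu$ into an algebraic symmetry between $\varphi$ and its \emph{reversal} $\til\varphi$, the morphism defined by $\til\varphi(\alpha)=\til{\varphi(\alpha)}$ for each $\alpha\in\alphabet$. Since the periodic case of Conjecture~\ref{conj:hks_corrige} is already settled, I would reduce at once to $\bu$ aperiodic.

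For the easy implication, suppose $\varphi^2$ has a conjugate $\psi$ in class $\P$. Conjugate morphisms generate the same subshift, and since $\varphi$ is primitive the fixed points of $\varphi$, of $\varphi^2$ and of $\psi$ all share the same factor set $\Fact(\bu)$. Hence it suffices to know that the fixed point of $\psi$ is palindromic, which is precisely the theorem of Hof, Knill and Simon recalled above; so $\bu$ is palindromic. No further work is needed here.

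For the converse I would argue as follows. Because $\varphi$ is primitive, $\bu$ is uniformly recurrent, so an infinitude of palindromic factors forces palindromes of unbounded length; as every factor of $\bu$ eventually lies inside such a long palindrome, the language $\Fact(\bu)$ is closed under reversal. Consequently $\til\varphi$ is again primitive (it has the same incidence matrix as $\varphi$) and it generates the very same subshift, namely $\til X = X$. The crux is then a \emph{rigidity} statement: on a binary alphabet the morphic desubstitution of a minimal aperiodic subshift is essentially unique, so $\til\varphi$ must be a conjugate of $\varphi$ up to the exchange of letters $E\colon a\leftrightarrow b$; that is, there are a word $c$ and $\pi\in\{\Id,E\}$ with $\til\varphi(\alpha)=c^{-1}\varphi(\pi(\alpha))\,c$ for every $\alpha$. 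This is where I expect the real difficulty to lie, and where the \flcorresponding\ machinery enters: using recognizability of primitive aperiodic morphisms one aligns the two block decompositions induced by $\varphi$ and by $\til\varphi$, and tracking the first and last letters ($\Fst$ and $\Lst$) of the images pins down both the permutation $\pi$ and the conjugating word $c$.

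Granting the rigidity step, the end game is algebraic and explains the appearance of $\varphi^2$. Passing to squares one has $\til{\varphi^2}=(\til\varphi)^2$, and since $E^2=\Id$ the letter exchange cancels, yielding a word $c$ with $\varphi^2(\alpha)\,c=c\,\til{\varphi^2}(\alpha)$ for all $\alpha$; reversing this identity shows that $\til c$ conjugates $\varphi^2$ to $\til{\varphi^2}$ as well, so $\til c\,c^{-1}$ centralises $\{\varphi^2(a),\varphi^2(b)\}$. Aperiodicity of $\bu$ makes this centraliser trivial (a nontrivial common commutant would force both images to be powers of one word, hence $\bu$ periodic), so $c=\til c$ is a palindrome. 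Finally, a morphism is, up to conjugacy, in class $\P$ exactly when it is right-conjugate to its reversal by a palindrome: splitting $c$ about its centre and conjugating $\varphi^2$ by the resulting prefix rewrites each image as $p\,q_\alpha$ with $p$ and the $q_\alpha$ palindromes. This produces a conjugate of $\varphi^2$ in class $\P$ and completes the proof; the case $\pi=\Id$ is the one where even $\varphi$ is already symmetric, while $\pi=E$ is the Thue--Morse-type situation in which only the square becomes centred, accounting for why the statement is about $\varphi^2$ rather than $\varphi$.
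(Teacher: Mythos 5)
First, note that the paper does not prove this statement: it is quoted from Tan \cite{MR2363365}, so there is no internal proof to compare against and your argument has to stand on its own. Your easy direction is essentially fine (a conjugate of $\varphi^2$ in class $\P$ is primitive with the same language as $\varphi$, and the Hof--Knill--Simon iteration produces arbitrarily long palindromes in that language), and the small algebraic observations in the converse are correct where you spell them out: $\til{\varphi^2}=(\til\varphi)^2$, the centraliser argument forcing the conjugating word to be a palindrome in the aperiodic case, and the equivalence ``some conjugate of $\psi$ lies in class $\P$ iff $\psi\triangleleft\til\psi$ via a palindrome'' (indeed $\psi(\alpha)w=w\til{\psi(\alpha)}$ with $w=\til w$ makes each $\psi(\alpha)w$ a palindrome with palindromic prefix and suffix $w$, whence $\psi(\alpha)=wq_\alpha$ with $q_\alpha$ a palindrome). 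But the converse has two genuine gaps.

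The first is the rigidity step, which you correctly identify as the crux but then only assert. The supporting claim --- that the morphic desubstitution of a minimal aperiodic binary subshift is essentially unique --- is false as stated: $\varphi$, all of its conjugates, and all of its powers desubstitute the same subshift in genuinely different ways. What you need is the much more specific statement that $\til\varphi$, which preserves the reversal-closed language and has the same incidence matrix as $\varphi$, must satisfy $\til\varphi(\alpha)\,c=c\,\varphi(\pi(\alpha))$ for some word $c$ and some permutation $\pi\in\{\Id,E\}$; proving that the two block decompositions are offset by a \emph{constant} and that the blocks match up letter-for-letter is the entire content of Tan's theorem, not a routine consequence of recognizability. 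The second gap is the ``end game'' in the case $\pi=E$: the letter exchange does \emph{not} cancel upon squaring. From $\til\varphi(\alpha)=c^{-1}\varphi(E(\alpha))c$ one gets
\[
(\til\varphi)^2(\alpha)=\bigl(\varphi(E(c))\,c\bigr)^{-1}\,\bigl(\varphi\circ E\circ\varphi\circ E\bigr)(\alpha)\,\bigl(\varphi(E(c))\,c\bigr),
\]
so $\til{\varphi^2}$ is conjugate to $(\varphi\circ E)^2=\varphi\circ(E\circ\varphi\circ E)$, not to $\varphi^2$; the two occurrences of $E$ are separated by a $\varphi$ and annihilate only when $E$ commutes with $\varphi$ (the Thue--Morse situation), which is not the general case. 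To repair this you would have to extract further structure from the rigidity step in the $\pi=E$ case (for instance that $\varphi(a)$ and $\varphi(b)$ are abelian-equivalent, hence $|\varphi(a)|=|\varphi(b)|$) and then prove directly that $\varphi^2$ and $\varphi\circ E\circ\varphi\circ E$ are conjugate by a palindrome; none of this follows from $E^2=\Id$. As written, the proposal is a plausible road map whose two load-bearing steps are, respectively, unproven and incorrect.
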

It was also proven independantly for uniform morphisms on the binary alphabet
in \cite{labbe_proprietes_2008} using a different approach based on overlap of
factors with a generalization to palindromes fixed by any antimorphisms. In
the uniform case for binary alphabet, it appears that conjugate morphisms are
not needed.
\begin{theorem}{\rm\cite{labbe_proprietes_2008}}\label{thm:binaryuniform}
Let $\varphi$ be a uniform primitive morphism on a binary alphabet. Then, the
fixed point of $\varphi$  is palindromic if and only if $\varphi$ or
$\widetilde{\varphi}$ is in class $\P$ or the image of any letter by
$\varphi^2$ is a palindrome (which means $\varphi^2$ is in class $\P$).
\end{theorem}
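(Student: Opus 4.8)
Suppose first that $\varphi$ or $\varphi^2$ is in class $\P$. Since $\bx$ is also the fixed point of $\varphi^2$, in both cases $\bx$ is generated by a morphism in class $\P$, so by the theorem of Hof, Knill and Simon \cite{MR1372804} quoted in the introduction it contains infinitely many palindromes. Suppose instead that $\widetilde\varphi$ is in class $\P$. Writing $\widetilde{\cdot}$ for reversal of finite words, one checks the identity $\widetilde{\varphi(w)}=\widetilde\varphi(\widetilde w)$ for every word $w$; hence the subshift generated by $\varphi$ is the mirror image of the one generated by $\widetilde\varphi$. The latter is primitive and in class $\P$, so by \cite{MR1372804} its fixed point contains arbitrarily long palindromes, and since reversal fixes palindromes and preserves the (uniformly recurrent) subshift, $\bx$ is palindromic as well. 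This settles $\Leftarrow$.

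\textbf{Forward implication: set-up.} Normalise so that $\varphi(a)$ begins with $a$, let $n=|\varphi(a)|=|\varphi(b)|\ge 2$, write $u=\varphi(a)$, $v=\varphi(b)$, and assume $\bx$ aperiodic (the periodic case being elementary). Being uniform, primitive and aperiodic, $\varphi$ is recognisable (Moss\'e), so there is a constant $L$ such that every factor $w$ of $\bx$ with $|w|\ge L$ has a \emph{unique} block decomposition $w=s\,\varphi(R)\,t$, where $R$ is a factor of $\bx$ and $s,t$ are a proper suffix and a proper prefix of blocks of $\varphi$, so $|s|,|t|<n$. Now let $P$ be a palindrome factor of $\bx$. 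Applying recognisability to $P$ and, via $\widetilde{\varphi(w)}=\widetilde\varphi(\widetilde w)$, to its reversal, we equip the single word $P$ with two block tilings: one by the blocks $\{u,v\}$ of $\varphi$, and one — its mirror image — by the blocks $\{\widetilde u,\widetilde v\}$ of $\widetilde\varphi$. Their cut points form two arithmetic progressions of common difference $n$, and the whole argument turns on the offset $d\equiv|t|-|s|\pmod n$.

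\textbf{Forward implication: the overlap dichotomy.} If $d=0$ the two tilings share all cut points, so reflection matches each interior block with a reversed block, giving for every symmetric pair of positions an identity $\varphi(\alpha)=\widetilde{\varphi(\beta)}$. Pushing $|P|\to\infty$ and using uniform recurrence (so that both letters occur symmetrically) forces exactly one of two structural alternatives: either $u$ and $v$ are both palindromes, whence $\varphi\in\P$ with empty common prefix; or $u=\widetilde v$, whence the desubstituted word $R$ is fixed not by reversal but by the antimorphism $\Theta=E\circ(\text{reversal})$, where $E$ exchanges $a$ and $b$. This is the point where, as in \cite{labbe_proprietes_2008}, one must track $\Theta$-palindromes for \emph{both} antimorphisms of the binary alphabet, reversal and reverse-exchange. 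If instead $d\ne 0$, then $P$ carries two block tilings offset by $d$; letting $|P|\to\infty$ and combining the resulting self-overlap with the theorem of Fine and Wilf and the primitivity of $\varphi$ forces the overhang $s$ to be a palindrome that is a common prefix of $u$ and $v$ with palindromic complements, i.e. $\varphi\in\P$, and symmetrically $\widetilde\varphi\in\P$ from the suffix side.

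\textbf{Closing the descent, and the main obstacle.} To finish one iterates the set-up on the desubstituted word $R$, whose defining antimorphism is determined by $\varphi$ and by the overhang of the previous step. Because the binary alphabet carries only the two antimorphisms above, the sequence of antimorphisms produced by successive desubstitutions takes finitely many values and is eventually periodic; since $\bx$ is palindromic the descent never terminates, so it enters a cycle. A fixed point of the descent under reversal returns palindromic images of the two letters, giving $\varphi\in\P$; a period-two cycle alternating reversal and reverse-exchange returns, after two applications of $\varphi$, palindromic images $\varphi^2(a),\varphi^2(b)$, giving $\varphi^2\in\P$; and the offset branch gives $\varphi\in\P$ or $\widetilde\varphi\in\P$. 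These are precisely the three stated alternatives. I expect the main difficulty to lie exactly here: controlling the overhangs $s,t$ so that desubstitution yields genuinely longer $\Theta$-palindromes and updates $\Theta$ correctly, thereby promoting the \emph{local} block identities $\varphi(\alpha)=\widetilde{\varphi(\beta)}$ to \emph{global} statements about $u$ and $v$ and, in the alternating case, delaying palindromicity by one step so that it is $\varphi^2$, rather than $\varphi$, that lands in class $\P$.
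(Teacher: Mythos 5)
First, a point of context: the paper does not prove Theorem~\ref{thm:binaryuniform} at all --- it is imported from \cite{labbe_proprietes_2008} with only a one-line description of the method (``overlap of factors with a generalization to palindromes fixed by any antimorphisms''). So there is no in-paper proof to compare against; your skeleton (recognizability, two block tilings of a long palindrome, the offset $d$, and the descent through $\Theta$-palindromes for the two antimorphisms of the binary alphabet) is consistent with that description and is the right shape of argument. The reverse implication is essentially fine.

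The forward implication, however, has genuine gaps at exactly the two decisive steps, and asserting the conclusion there is not the same as proving it. (1) In the offset case $d\neq 0$ you claim that ``self-overlap plus Fine and Wilf'' forces $\varphi\in\P$; but two tilings of $P$ by length-$n$ blocks shifted by $d$ do not hand you two periods of $P$, they hand you a system of word equations in which each $\widetilde{\varphi}$-block is a suffix of one $\varphi$-block glued to a prefix of the next. Extracting from that system that the overhang is a palindromic common prefix with palindromic complements is the actual content of the ``overlap of factors'' argument, and it is missing. (2) In the aligned case with $u=\widetilde{v}$, the conclusion that $\varphi^2(a)$ and $\varphi^2(b)$ are palindromes does not follow from $u=\widetilde{v}$ alone: one computes $\widetilde{\varphi^2(a)}=\widetilde{\varphi}(v)=\varphi(E(v))$, so palindromicity of $\varphi^2(a)$ requires additionally that $v$ be a $\Theta$-palindrome for $\Theta=E\circ\widetilde{\phantom{w}}$. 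That extra condition must be produced by the next level of the descent, i.e.\ by running the whole tiling analysis again on the desubstituted word $R$ with the antimorphism $\Theta$ in place of reversal, controlling the new overhangs and showing the $\Theta$-palindromes obtained are unbounded. You explicitly flag this as ``the main difficulty'' and leave it open, so the proposal is an accurate plan of attack rather than a proof.
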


In this article, we show that a result like Theorem~\ref{thm:binary} or
Theorem~\ref{thm:binaryuniform} is not possible beyond the binary alphabet.
Indeed, we show the existence of a primitive morphism having a palindromic
fixed point such that none of its power has a conjugate in class $\P$
(Lemma~\ref{lem:gammaknoclassP}). We show even more by providing a
counterexample to Conjecture~\ref{conj:hks_corrige} (Theorem~\ref{thm:main}).
We manage to prove such a result by describing explicitely the stabilizer
of a particular infinite sequence $\bx$, i.e. the monoid of morphisms that fix
$\bx$, which is a hard problem in general. We use the properties of symmetric
words, a notion strongly related to class $\P$ morphisms.

The rest of this article consists of two sections, namely some definitions and
notations and the proof of the counterexample.

\section{Definitions and notation}

\subsection{Combinatorics on words}

We borrow from M. Lothaire \cite{MR1905123} the basic terminology about words.
In what follows, $\alphabet$ is a finite {\em alphabet} whose elements are
called  {\em letters}. A {\em word} $w$ is a finite sequence of letters
$w=w_0w_1\cdots w_{n-1}$ where $n\in\N$.
The length of $w$ is $|w|=n$ and $w_i$ denotes its $i$-th letter.
By convention the \emph{empty word} is denoted $\varepsilon$ and
its length is $0$.
The set of all finite words over $\alphabet$ is denoted by $\alphabet^*$.
Endowed with the concatenation, $\alphabet^*$ is the \emph{free monoid generated by
$\alphabet$}.
The set of right infinite words is denoted by $\alphabet^{\N}$ and the set of
biinfinite words is $\alphabet^\Z$.
Given a word $w \in \alphabet^*\cup \alphabet^{\N}$,  a \emph{factor} $f$ of $w$ is a word
$f \in \alphabet^*$ satisfying $w = xfy$ for some $x \in \alphabet^*$ and $y
\in \alphabet^*\cup \alphabet^{\N}$.
If $x=\varepsilon$ (resp. $y=\varepsilon$ ) then $f$ is called 
{\em prefix} (resp. {\em suffix}) of $w$.
The set of all factors of $w$, called the \emph{language} of $w$, is denoted
by $\Fact(w)$.
A word $w$ is {\em primitive} if
it is not a power of another word, that is if $w=u^p$ for some word $u$ and
integer~$p$ then $w=u$ and $p=1$.
Two words $u$ and $v$ are {\em conjugate} when there are words $x,y$ such that
$u=xy$ and $v=yx$.
A {\em period} of a word $w$ is an integer $p<|w|$ such that $w_i=w_{i+p}$,
for all $i< |w|-p$. We recall the result of Fine and Wilf for doubly periodic
words \cite[Theorem 8.1.4]{MR1905123}.
\begin{lemma}[Fine and Wilf]\label{lem:finewilf}
Let $w$ be a word having period $p$ and $q$. If
$|w|\geq p+q-\gcd(p,q)$, then $\gcd(p,q)$ is also a period of $w$.
\end{lemma}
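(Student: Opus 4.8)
The plan is to argue by a connectivity analysis of the positions of $w$, which turns the statement into a clean counting fact. Write $n=|w|$ and $d=\gcd(p,q)$, and assume without loss of generality that $p\geq q$. The key preliminary observation is that, because $w$ has period $q$, all positions in $\{0,1,\dots,n-1\}$ that are congruent modulo $q$ carry the same letter: the chain $s,s+q,s+2q,\dots$ lying in range is linked by period-$q$ equalities. Hence there is a function $f$ on $\mathbb{Z}/q\mathbb{Z}$ with $w_i=f(i \bmod q)$, and proving that $d$ is a period of $w$ amounts to proving that $f$ is constant on each residue class modulo $d$ (so that $w_i$ depends only on $i \bmod d$).

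Next I would exploit the second period. Each valid instance of period $p$, namely $w_i=w_{i+p}$ for $0\leq i\leq n-1-p$, becomes the relation $f(s)=f(s+r)$ in $\mathbb{Z}/q\mathbb{Z}$, where $r=p \bmod q$ and $s=i \bmod q$. View these relations as the edges of a graph on $\mathbb{Z}/q\mathbb{Z}$ whose arcs go $s\mapsto s+r$. Since $\gcd(r,q)=\gcd(p,q)=d$, the full translation $s\mapsto s+r$ splits $\mathbb{Z}/q\mathbb{Z}$ into exactly $d$ cycles, and because $d\mid r$ these cycles are precisely the residue classes modulo $d$. So it suffices to show that each such cycle is connected by the arcs that are actually realised, for then $f$ is forced to be constant on it.

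The main obstacle, and the only place the length hypothesis enters, is controlling which arcs are missing. An arc out of $s$ is present as soon as some $i\in[0,n-1-p]$ satisfies $i\equiv s \pmod q$; since $[0,n-1-p]$ contains $n-p\geq q-d$ consecutive integers, the only residues without an outgoing arc lie in the block $\{n-p,\dots,q-1\}$, which consists of at most $p+q-n\leq d$ consecutive integers. Being consecutive, these at most $d$ residues are pairwise distinct modulo $d$, so each cycle loses at most one arc; a cycle with one arc removed is a path, and hence every cycle remains connected. This gives constancy of $f$ on each class modulo $d$ and therefore period $d$. I expect the delicate step to be exactly this bookkeeping --- checking that the missing residues land one per cycle --- which is where the sharp bound $n\geq p+q-d$ is used; the classical subtractive Euclidean descent $(p,q)\mapsto(p-q,q)$ gives an alternative route but runs into the same boundary window of at most $d$ positions.
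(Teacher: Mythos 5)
Your proof is correct. Note, however, that the paper offers no proof of this lemma at all: it is quoted from Lothaire (Theorem 8.1.4 of the cited reference), so the only comparison available is with the standard textbook arguments, which typically proceed by induction on $p+q$ through the subtractive Euclidean step $(p,q)\mapsto(p-q,q)$. Your route is genuinely different and self-contained: you encode the word as a function $f$ on $\mathbb{Z}/q\mathbb{Z}$ using the period $q$, turn each realised instance of the period $p$ into an arc $s\mapsto s+(p\bmod q)$, observe that the full translation splits $\mathbb{Z}/q\mathbb{Z}$ into $\gcd(p,q)$ cycles which are exactly the residue classes modulo $\gcd(p,q)$, and then use the length hypothesis precisely once to show that the residues lacking an outgoing arc form a block of at most $p+q-|w|\leq\gcd(p,q)$ consecutive integers, hence meet each cycle at most once, so each cycle survives as a connected path on which $f$ must be constant. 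Every step checks out, including the degenerate cases $q\mid p$ and $|w|-p\geq q$, and well-definedness of $f$ follows from $q<|w|$. The Euclidean induction buys brevity; your connectivity argument buys an explicit picture of which equalities force which, and it makes the sharpness of the bound transparent, since one further missing position could delete two arcs from the same cycle and disconnect it, which is exactly how the classical extremal examples arise.
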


\subsection{Symmetric words}

The {\em reversal} of
$w = w_0w_1 \cdots w_{n-1} \in\alphabet^n$ is the word $\til{\,w\,}=w_{n-1}
w_{n-2}\cdots w_0$.  A {\em palindrome} is a word $w$ such that $
w=\til{\,w\,}$. An infinite sequence $\bw$ is \emph{palindromic} if its
language $\L(\bw)$ contains arbitrarily long palindromes.

A word $w=w_0w_1\cdots w_{n-1}$ is \emph{symmetric} if it is the product of
two palindromes if and only if there exists an integer $0\leq a \leq n-1$ such
that
\[
w_{a-i} = w_i \quad\text{for all}\quad 0\leq i\leq n-1
\]
where the indices are taken modulo $n$. We say that the
integer $a$ is a \emph{point of symmetry} of the word~$w$.
For example, the integer $6$ is the unique point of symmetry of the word
$w=abcdcbaxyzzyx$. In the table below, we verify that
$w_{6-i} = w_i$ for all $0\leq i\leq 12$:
\[
\begin{array}{c|ccccccccccccc}
i   & 0 & 1 & 2 & 3 & 4 & 5 & 6 & 7 & 8 & 9 & 10 & 11 & 12\\
\hline
w_i & a & b & c & d & c & b & a & x & y & z & z & y & x\\
6-i & 6 & 5 & 4 & 3 & 2 & 1 & 0 & 12 & 11 & 10 & 9 & 8 & 7\\
w_{6-i} & a & b & c & d & c & b & a & x & y & z & z & y & x
\end{array}
\]
The proofs of the following lemmas are easy.
\begin{lemma}\label{lem:basicpointofsymmetry}
A finite $w$ word has a point of symmetry at $a$ if and only if $w=pq$ for some
palindromes $p$ and $q$ such that $|p|=a+1$.
\end{lemma}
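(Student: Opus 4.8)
The plan is to prove the biconditional by translating the point-of-symmetry condition into two independent palindrome conditions, via a case split on the index $i$ according to whether the cyclic index $a-i$ needs reduction modulo $n$. Throughout, write $n=|w|$ and, for the given $a$ with $0\le a\le n-1$, set $p=w_0w_1\cdots w_a$ and $q=w_{a+1}\cdots w_{n-1}$, so that $w=pq$, $|p|=a+1$, and $|q|=n-a-1$ (allowing $q=\varepsilon$ when $a=n-1$).

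First I would treat the indices $0\le i\le a$. For these, $a-i$ lies in $\{0,1,\dots,a\}$, so no reduction modulo $n$ occurs and the defining equation $w_{a-i}=w_i$ reads, in terms of the letters of $p$, as $p_{a-i}=p_i$ for $0\le i\le a$; this is exactly the assertion that $p$ is a palindrome.

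Next I would treat the indices $a+1\le i\le n-1$. Here $a-i$ is negative, so its residue modulo $n$ is $n+a-i$, which ranges over $\{a+1,\dots,n-1\}$ as $i$ does. Thus $w_{a-i}=w_i$ becomes $w_{n+a-i}=w_i$, and the substitution $j=i-(a+1)$, with $0\le j\le n-a-2$, rewrites it as $w_{n-1-j}=w_{a+1+j}$. In terms of the letters of $q$ this is $q_{(n-a-2)-j}=q_j$, i.e. precisely the palindrome condition for $q$.

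Since the two ranges partition $\{0,1,\dots,n-1\}$, the point-of-symmetry condition at $a$ holds if and only if both $p$ and $q$ are palindromes, which yields both implications of the lemma simultaneously. I expect the only delicate point to be the bookkeeping of the modular reduction and the index substitution in the second range; beyond getting these ranges and endpoints right, there is no genuine obstacle.
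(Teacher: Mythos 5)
Your proof is correct; the index bookkeeping in both ranges checks out, including the edge case $a=n-1$ where $q=\varepsilon$. The paper offers no proof of this lemma (it merely declares it easy), and your direct computation --- splitting $\{0,\dots,n-1\}$ into the indices where $a-i$ does not wrap modulo $n$ (giving the palindrome condition on $p$) and those where it does (giving the palindrome condition on $q$) --- is exactly the intended elementary argument.
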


\begin{lemma}\label{lem:conjugatesymmetric}
Let $w$ and $w'$ be two conjugate words, i.e. $uw = w'u$ for some word $u$.
If $w$ has a point of symmetry at $a$, then $w'$ has a point of symmetry at
$a+2|u|\mod |w|$.
\end{lemma}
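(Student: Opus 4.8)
The plan is to reduce the statement to a direct computation with indices modulo $n := |w| = |w'|$. First I would unwind the conjugacy hypothesis $uw = w'u$ into a clean index identity. When $|u| \le |w|$, writing $w = xy$ with $|y| = |u|$ forces $u = y$ and $w' = yx$, so that $w'$ is the cyclic rotation of $w$ by $|u|$ positions; the general case is handled the same way after observing that $uw = w'u$ has $|w|$ as a period on its overlap. Setting $m := |u|$, this reformulation reads
\[
w'_i = w_{(i-m)\bmod n} \qquad \text{for all } 0 \le i \le n-1,
\]
which I would verify by comparing the two factorizations of the length-$(m+n)$ word $uw = w'u$ letter by letter. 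This is the load-bearing step; once it is in place, everything else is arithmetic.

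Next I would feed this identity into the definition of a point of symmetry. By hypothesis $w$ satisfies $w_{(a-j)\bmod n} = w_j$ for all $j$. Substituting $j = (i-m)\bmod n$ into the symmetry relation and using the shift identity gives
\[
w'_i = w_{(i-m)\bmod n} = w_{(a-i+m)\bmod n}.
\]
On the other hand, applying the shift identity to the index $(a'-i)\bmod n$ yields
\[
w'_{(a'-i)\bmod n} = w_{(a'-i-m)\bmod n}.
\]
To make $w'$ symmetric at $a'$, that is $w'_{(a'-i)\bmod n} = w'_i$ for every $i$, it suffices that the two right-hand arguments agree modulo $n$ for all $i$, i.e. $a - i + m \equiv a' - i - m \pmod n$. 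The variable $i$ cancels, leaving $a' \equiv a + 2m \pmod n$, which is exactly the claimed point of symmetry $a + 2|u| \bmod |w|$.

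The only genuine care needed is the modular bookkeeping: keeping the reductions modulo $n$ consistent throughout, getting the direction of the shift right (rotating by $u$ on the left moves indices \emph{backward} by $|u|$), and noting that the final formula depends only on $m \bmod n$, which is harmless since $2(m+n) \equiv 2m \pmod n$. I would sanity-check the sign against the running example by rotating $abcdcbaxyzzyx$ (point of symmetry $6$) by one letter, which should place the new point of symmetry at $8$. I expect no deeper obstacle: as the paper notes, this is one of the \emph{easy} lemmas, and essentially all of its content lies in establishing the shift identity in the first step.
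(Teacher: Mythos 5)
Your proof is correct. The paper actually omits the proof of this lemma (dismissing it as ``easy''), and your argument --- establishing the shift identity $w'_i = w_{(i-m)\bmod n}$ from $uw = w'u$ and then pushing it through the definition of a point of symmetry to get $a' \equiv a + 2|u| \pmod{n}$ --- is the natural computation the author had in mind, with the sign of the shift correctly verified against the running example.
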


\begin{lemma}\label{lem:twopoints}
If a finite word $w$ has two points of symmetry $a$ and $b$, then $gcd(b-a,|w|)$
is a period of $w$. In particular, $w$ is not primitive.
\end{lemma}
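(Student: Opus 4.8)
The plan is to convert the two reflection symmetries into a single translation symmetry and then pass to the subgroup of cyclic shifts it generates. Write $n=|w|$ and recall from the definition that ``$a$ is a point of symmetry'' means $w_i=w_{a-i}$ for all $i$, with indices read modulo $n$; likewise, $b$ being a point of symmetry means $w_i=w_{b-i}$ for all $i$ modulo $n$. The whole argument is an index computation in $\Z/n\Z$, and no appeal to Fine--Wilf (Lemma~\ref{lem:finewilf}) is needed.

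First I would compose the two relations. Starting from $w_i=w_{a-i}$ and applying the second symmetry to the index $a-i$ gives $w_{a-i}=w_{b-(a-i)}=w_{(b-a)+i}$, hence $w_i=w_{i+(b-a)}$ for every $i$ modulo $n$; that is, $w$ is invariant under the cyclic shift by $b-a$. Iterating, $w$ is invariant under the shift by any multiple $k(b-a)\bmod n$, i.e. under every element of the subgroup of $\Z/n\Z$ generated by $b-a$. That subgroup is exactly $\langle g\rangle$ with $g=\gcd(b-a,n)$, so in particular $w_i=w_{i+g}$ for all $i$ modulo $n$.

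Finally, I would upgrade this cyclic periodicity to an honest period. Since $a\neq b$ and $0\le a,b\le n-1$, we have $g=\gcd(b-a,n)\le |b-a|<n$; thus for $i<n-g$ there is no wraparound and the relation $w_i=w_{i+g}$ holds literally, so $g$ is a period of $w$ in the usual sense, proving the first assertion. Moreover $g\mid n$ and $g<n$, so $w=u^{n/g}$ with $|u|=g$ and $n/g\ge 2$, whence $w$ is a proper power and therefore not primitive. The only step requiring care is the bookkeeping of indices modulo $n$, together with the observation that the cyclic relation $w_i=w_{i+g\bmod n}$ becomes a genuine period precisely because $g$ divides $n$; this is exactly what lets me conclude that $w$ is a nontrivial power rather than merely cyclically periodic.
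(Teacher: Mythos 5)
Your proof is correct. The central step---composing the two reflections to get $w_i=w_{a-i}=w_{b-(a-i)}=w_{i+(b-a)}$, i.e.\ invariance under the cyclic shift by $b-a$---is exactly the computation in the paper's proof. Where you diverge is in how the shift by $b-a$ is upgraded to a shift by $\gcd(b-a,|w|)$: the paper embeds $w$ in the biinfinite periodic word $\bw=\cdots ww.ww\cdots$, observes that $\bw$ has periods $|w|$ and $b-a$, and invokes Fine and Wilf (Lemma~\ref{lem:finewilf}, whose length hypothesis is vacuous for a biinfinite word), whereas you stay inside $\Z/n\Z$ and use the fact that the subgroup generated by $b-a$ is the one generated by $\gcd(b-a,n)$ (i.e.\ B\'ezout). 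The two routes prove the same thing; yours is more self-contained and elementary, avoiding Fine--Wilf entirely, and you are also more explicit about the final bookkeeping (that $g<n$ because the two points of symmetry are distinct, that the cyclic relation becomes a genuine period because there is no wraparound for $i<n-g$, and that $g\mid n$ yields $w=u^{n/g}$ with $n/g\ge 2$), details the paper leaves implicit.
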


\begin{proof}
Let $\bw=\cdots ww.www\cdots = \cdots w_{n-1}.w_0w_1\cdots\in\alphabet^\Z$ be
the binfinite word such that the index $0$ of both $\bw$ and $w$ coincide.
The fact that $w$ has two points of symmetry $a$ and $b$ translates into the
fact that $\bw_{a-i} = \bw_{b-i} = \bw_i$ for all $i \in \Z$.
Then, $\bw_{i} = \bw_{a-i} = \bw_{b-(a-i)} = \bw_{i+(b-a)}$ for all $i\in\Z$.
Therefore, both $|w|$ and $b-a$ are periods of $\bw$.
We conclude by Lemma~\ref{lem:finewilf} that $gcd(b-a,|w|)$
is a period of $\bw$ and of $w$.
\end{proof}

\subsection{Morphisms}

A {\em morphism} is a function $\varphi : \alphabet^* \to \alphabet^*$
compatible with concatenation, that is, such that $\varphi (uv)= \varphi(u)
\varphi (v)$ for all $u,v \in \alphabet^*$. 
The \emph{identity morphism} on $\alphabet$ is denoted by $\Id_\alphabet$ or
simply $\Id$ when the context is clear.
A morphism $\varphi$ is \emph{primitive} if
there exists an integer $k$
such that for all $\alpha\in\alphabet$,
$\varphi^k(\alpha)$ contains each letter of $\alphabet$.
A morphism is called {\em uniform} when $|\varphi(\alpha)|=|\varphi(\beta)|$
for all letters $\alpha,\beta\in\alphabet$.
A morphism also extends in a natural way to a map over $\alphabet^\N$.
The set of \emph{sturmian morphisms} is \cite{MR1265903} the monoid generated
by
\[
\left\{
\begin{array}{ccl}
a & \mapsto & ab\\
b & \mapsto & a
\end{array},
\quad\quad
\begin{array}{ccl}
a & \mapsto & ba\\
b & \mapsto & a
\end{array},
\quad\quad
\begin{array}{ccl}
a & \mapsto & b\\
b & \mapsto & a
\end{array}
\right\}.
\]
A subset $X$ of the free monoid $\alphabet^*$ is a \emph{code} if there
exists an injective morphism $\beta: \mathcal B^*\to \alphabet^*$ such that $X
= \beta(\mathcal B)$.

Recall from Lothaire \cite{MR1905123} (Section 2.3.4)
that $\varphi$ is \emph{right conjugate} of $\varphi'$,
or that $\varphi'$ is \emph{left conjugate} of $\varphi$,
noted $\varphi\triangleleft\varphi'$, if
there exists $w \in \alphabet^*$ such that
\begin{equation}
\varphi(x)w = w\varphi'(x), \quad \textrm{for all words } x \in \alphabet^*, \label{FirstCond}
\end{equation}
or equivalently that
$\varphi(\alpha)w = w\varphi'(\alpha)$, for all letters $\alpha \in \alphabet$.
Clearly, this relation is not symmetric so that we say that two
morphisms $\varphi$ and $\varphi'$ are \emph{conjugate},
if $\varphi\triangleleft\varphi'$ or $\varphi'\triangleleft\varphi$.  It is
easy to see that conjugacy of morphisms is an equivalence relation.

A morphism is \emph{erasing} if the image of one of the letters is the
empty word.
If $\varphi$ is a nonerasing morphism, we define
$\Fst(\varphi):\alphabet\to\alphabet$ to be the
function defined by $\Fst(\varphi)(a)$ is the first letter of $\varphi(a)$.
Similarly, let $\Lst(\varphi):\alphabet\to\alphabet$ be the
function defined by $\Lst(\varphi)(a)$ is the last letter of $\varphi(a)$.
When an total order on the alphabet is clear, we represent $\Fst(\varphi)$ and
$\Lst(\varphi)$ as an ordered list.


A morphism $\varphi$ is \emph{prolongable at $a$} if there is a letter $a$
such that $\varphi(a) = aw$ where $w$ is a nonempty word $w$.
If $\varphi$ is prolongable at $a$, then
\[
\bw = a w \varphi(w) \varphi(\varphi(w)) \cdots \varphi^{n}(w) \cdots
\]
is a \emph{purely morphic word}. It is also a \emph{fixed point} of
$\varphi$, i.e. $\varphi(\bw)=\bw$.
A \emph{morphic word} is the image of a pure morphic word under a morphism.

The \emph{mirror-image} of a morphism $\varphi$, denoted by $\til{\varphi}$,
is the morphism such that $\til{\varphi}(\alpha) = \til{\varphi(\alpha)}$ for
all $\alpha \in \alphabet$.
A morphism $\varphi$ is in \emph{class $\P$} if there exists a palindrome $p$
and for every $\alpha \in \alphabet$ there exists a palindrome $q_\alpha$ such
that $\varphi(\alpha)=pq_\alpha$. The mirror-image $\til{\varphi}$ of a
morphism $\varphi$ in class~$\P$ is conjugate to $\varphi$.
If a morphism has a conjugate in class $\P$, then the image of the letters are
symmetric and they all share a common symmetry point which is quite demanding
as we will see.

\begin{lemma}
If $\varphi$ has a conjugate in class $\P$, then there exists an integer
$a\in\N$ such that for all $\alpha\in\alphabet$, $\varphi(\alpha)$ is
symmetric with a point of symmetry at $a \mod |\varphi(\alpha)|$.
\end{lemma}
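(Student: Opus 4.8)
The plan is to reduce the statement to the two elementary facts about points of symmetry that are already available, namely Lemma~\ref{lem:basicpointofsymmetry} and Lemma~\ref{lem:conjugatesymmetric}. First I would unpack the hypothesis: let $\psi$ be a morphism in class~$\P$ conjugate to $\varphi$, so that there are a palindrome $p$ and palindromes $q_\alpha$ with $\psi(\alpha)=pq_\alpha$ for every $\alpha\in\alphabet$. Because the prefix palindrome $p$ is common to all images and its length does not depend on $\alpha$, Lemma~\ref{lem:basicpointofsymmetry} applies uniformly and shows that every $\psi(\alpha)$ is symmetric with a point of symmetry at the single value $a_0=|p|-1$, read modulo $|\psi(\alpha)|$. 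This is exactly the rigidity that class~$\P$ buys us: a shared prefix palindrome forces a shared symmetry point, and $a_0$ is already independent of $\alpha$.

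Next I would transport this common symmetry point from $\psi$ to $\varphi$ using the conjugacy. By definition there is a word $w$ realising either $\varphi(\alpha)w=w\psi(\alpha)$ for all $\alpha$ (the case $\varphi\triangleleft\psi$) or $\psi(\alpha)w=w\varphi(\alpha)$ for all $\alpha$ (the case $\psi\triangleleft\varphi$), the same $w$ serving every letter simultaneously. In the first case the identity $w\psi(\alpha)=\varphi(\alpha)w$ is precisely the hypothesis of Lemma~\ref{lem:conjugatesymmetric} with conjugating word $w$ and with $\psi(\alpha)$ the word whose symmetry is known; hence $\varphi(\alpha)$ has a point of symmetry at $a_0+2|w|\bmod|\psi(\alpha)|$. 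Since conjugate words have equal length we have $|\psi(\alpha)|=|\varphi(\alpha)|$, so the shifted value $a:=a_0+2|w|$ is again independent of $\alpha$, as required.

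The remaining case $\psi\triangleleft\varphi$ is where I expect the only real friction, because Lemma~\ref{lem:conjugatesymmetric} transfers a known symmetry point only out of the first word of its relation, whereas here the symmetric word $\psi(\alpha)$ sits on the wrong side of $\psi(\alpha)w=w\varphi(\alpha)$. I would resolve this by conjugating with the complementary factor: since $\psi(\alpha)$ and $\varphi(\alpha)$ are conjugate words, there is a word $t_\alpha$ with $t_\alpha\psi(\alpha)=\varphi(\alpha)t_\alpha$ and $|t_\alpha|\equiv-|w|\pmod{|\psi(\alpha)|}$ (concretely $\psi(\alpha)=wt_\alpha$ and $\varphi(\alpha)=t_\alpha w$, after reducing $|w|$ below $|\psi(\alpha)|$). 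Applying Lemma~\ref{lem:conjugatesymmetric} with conjugating word $t_\alpha$ now places a point of symmetry of $\varphi(\alpha)$ at $a_0+2|t_\alpha|\equiv a_0-2|w|\pmod{|\psi(\alpha)|}$, once more an $\alpha$-independent integer $a:=a_0-2|w|$. In both cases $\varphi(\alpha)$ is symmetric with point of symmetry at the common integer $a\bmod|\varphi(\alpha)|$; should $a$ be negative, adding a common multiple of the finitely many lengths $|\varphi(\alpha)|$ brings it into $\N$ without altering any residue, which finishes the proof.
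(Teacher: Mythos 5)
Your proof is correct and follows essentially the same route as the paper: apply Lemma~\ref{lem:basicpointofsymmetry} to the class-$\P$ conjugate to get the common symmetry point $|p|-1$, then transfer it to $\varphi$ via Lemma~\ref{lem:conjugatesymmetric}. The only difference is that you spell out the case where the conjugacy relation points the ``wrong'' way for Lemma~\ref{lem:conjugatesymmetric}, a detail the paper leaves implicit; your handling of it (replacing $w$ by the complementary conjugating word $t_\alpha$, giving the $\alpha$-independent point $a_0-2|w|$) is sound.
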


\begin{proof}
Let $\varphi'$ in class $\P$ be the conjugate of $\varphi$.
Let $p$ and $q_\alpha$ be the palindromes such that
$\varphi'(\alpha)=pq_\alpha$.
Then for all letters $\alpha$, $\varphi'(\alpha)$ has a point of symmetry at
$|p|-1$ (Lemma~\ref{lem:basicpointofsymmetry}). The result follows from
Lemma~\ref{lem:conjugatesymmetric}.
\end{proof}

\subsection{Stabilizer}

The \emph{stabilizer} of a right-infinite word $\bw$ over a finite alphabet
$\alphabet$, denoted by $\Stab(\bw)$, is the monoid of morphisms
$f:\alphabet^*\to\alphabet^*$ that satisfy $f(\bw)=\bw$. Its unit element is the
identity morphism. Words that have a cyclic stabilizer are called \emph{rigid}.
See \cite{MR2543346,MR2424350} for a discussion on the subject and
\cite{MR2901382} for a recently completed proof of the rigidity of
words generated by sturmian morphisms.
\begin{proposition}{\rm\cite{MR1603835,MR2901382}}
Sturmian words generated by sturmian morphisms are rigid.
\end{proposition}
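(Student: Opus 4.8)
The plan is to study $\Stab(\bw)$ through the abelianization map and to show that its image is a cyclic submonoid of a cyclic group. Fix a nontrivial sturmian morphism $\sigma$ with $\sigma(\bw)=\bw$ and let $M_\sigma\in GL_2(\Z)$ be its incidence matrix; since $\sigma$ is primitive, $M_\sigma$ has a unique positive eigendirection, namely the letter-frequency vector $v$ of $\bw$, whose slope $\alpha$ is irrational. The first step is to record that abelianization $f\mapsto M_f$ is injective on $\Stab(\bw)$. Indeed, if $M_f=M_g$ then $|f(c)|=|g(c)|$ for each letter $c$, so in the equality $\bw=f(\bw)=\prod_i f(w_i)=\prod_i g(w_i)=g(\bw)$ the consecutive blocks $f(w_i)$ and $g(w_i)$ occupy the same positions and hence coincide; as $\bw$ contains both letters, this forces $f(a)=g(a)$ and $f(b)=g(b)$, i.e. $f=g$.

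Next I would locate the image inside a cyclic group. Comparing letter frequencies in $f(\bw)=\bw$ shows that $v$ is an eigenvector of every $M_f$ with $f\in\Stab(\bw)$, so each $M_f$ lies in the subgroup $H\le GL_2(\Z)$ fixing the direction $\alpha$. Because $\alpha$ is a quadratic irrational, $H$ is the unit group of an order in a real quadratic field, so by Dirichlet's unit theorem $H/\{\pm I\}$ is infinite cyclic, generated by a fundamental hyperbolic matrix $P$ that I may take to have nonnegative entries and Perron eigenvalue $>1$. Now every morphism in $\Stab(\bw)$ is itself sturmian (any morphism mapping a Sturmian word to a Sturmian word is sturmian), hence has a nonnegative incidence matrix; together with the eigenvector condition this confines the image to $\{P^n:n\ge 0\}$. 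Under the identification $P^n\leftrightarrow n$, the monoid $M(\Stab(\bw))$ becomes a submonoid $S\subseteq(\N,+)$, and by injectivity $\Stab(\bw)\cong S$.

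It remains to prove that $S$ is cyclic, i.e. $S=n_0\N$ where $n_0=\min(S\setminus\{0\})$; combined with Step 1 this yields $\Stab(\bw)=\langle g_0\rangle$ for the unique $g_0$ with $M_{g_0}=P^{n_0}$. The decisive point, and the step I expect to be the main obstacle, is a cancellation property: if $f,g\in\Stab(\bw)$ have $M_f=P^m$ and $M_g=P^n$ with $m>n$, then $P^{m-n}\in S$. To establish it I would factor $f=h\circ g$ for a genuine morphism $h$; then $h(\bw)=h(g(\bw))=f(\bw)=\bw$ gives $h\in\Stab(\bw)$ with $M_h=P^{m-n}$. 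The existence of such an $h$ amounts to right-dividing $f$ by $g$, and this is where recognizability enters: $g$ is sturmian, hence recognizable in the sense of Moss\'e, so the $g$-factorization $\bw=\prod_j g(w_j)$ is the unique one, and the two factorizations $\bw=\prod_i f(w_i)=\prod_j g(w_j)$ must have nested cut-points; consequently each $f(w_i)$ is a concatenation of $g$-blocks, and reading these blocks off consistently on $a$ and $b$ defines $h$ with $f=h\circ g$. Granting this division, a submonoid of $\N$ closed under subtraction of its elements is exactly $n_0\N$, so $S=n_0\N$ and $\Stab(\bw)$ is cyclic. (An alternative route to cyclicity, which sidesteps the division argument, is to invoke the purely periodic continued fraction expansion of $\alpha$ and to check that a sturmian morphism fixes $\bw$ precisely when it realizes a whole number of periods of that expansion with the correct intercept; the minimal period then provides the generator directly.)
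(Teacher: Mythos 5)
This proposition is not proved in the paper: it is quoted from the cited references, so there is no internal argument to compare yours against. Judged on its own terms, your reduction is well organized and several steps are sound: the injectivity of $f\mapsto M_f$ on $\Stab(\bw)$ (equal column sums force the block factorizations of $\bw=f(\bw)=g(\bw)$ to have the same cut points), the fact that the letter-frequency vector is a common positive eigenvector, the identification of the eigenvector-stabilizing subgroup of $GL_2(\Z)$ with a unit group that is infinite cyclic modulo $\pm I$, and the exclusion of negative exponents (a nonnegative matrix in $GL_2(\Z)$ with nonnegative inverse is a permutation matrix, which cannot occur for a non-identity element of the stabilizer). These parts, suitably written out, are correct.

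The genuine gap is the cancellation step, which you yourself flag as the main obstacle but then dispatch in two lines. From recognizability of $g$ you conclude that ``the two factorizations must have nested cut-points.'' That inference is not valid: Moss\'e-type recognizability says that the $g$-cut points are the \emph{unique} decomposition of $\bw$ into blocks $g(a),g(b)$ and are locally detectable; it says nothing about how an unrelated factorization of $\bw$ into the blocks $f(a),f(b)$ sits relative to them. A priori an $f$-cut point has no reason to land on a $g$-cut point, and proving that it does (equivalently, that $f$ factors through $g$ in the morphism monoid) is essentially the whole content of the theorem --- once you know every element of $\Stab(\bw)$ is a power of a fixed generator, nesting of cut points follows, but not conversely. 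Your parenthetical alternative (``check that a sturmian morphism fixes $\bw$ precisely when it realizes a whole number of periods of the continued fraction expansion with the correct intercept'') is likewise a restatement of the difficulty rather than a proof: controlling the intercept for non-characteristic Sturmian words is precisely the delicate point, and it is the reason a complete proof of this proposition required the second of the two cited papers. So the architecture is reasonable, but the division/cancellation lemma needs an actual argument (for instance via the structure of the Sturmian monoid and desubstitution along the directive sequence), and without it the proof does not go through.
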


\section{The counterexample}

Let $\alphabet=\{a,b,c\}$ and $\gamma:\alphabet^*\to\alphabet^*$ be defined as
\[
\gamma:a\mapsto aca, b\mapsto cab, c\mapsto b
\]
and $\bx_\gamma$ be the fixed point of $\gamma$ starting with letter $a$:
\[
\bx_\gamma = acabacacabacabacabacacabacabacacabacabac \cdots
\]
In this section, we show that $\bx_\gamma$ is a counterexample to
Conjecture~\ref{conj:hks_corrige}. More precisely, we prove the following
result.
\begin{theorem}\label{thm:main}
The fixed point $\bx_\gamma$ of the primitive morphism $\gamma$ is
palindromic, but none of the morphisms $\varphi\in\Stab(\bx_\gamma)$ such that
$\varphi\neq\Id$, has a conjugate in class~$\P$.
\end{theorem}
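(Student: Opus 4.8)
The plan is to prove this in two independent parts. First I would establish that $\bx_\gamma$ is palindromic, and then I would prove the much harder negative statement: that no nonidentity element of $\Stab(\bx_\gamma)$ has a conjugate in class $\P$.

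For the palindromicity, I would look for a conjugate of $\gamma$ (or a small power of it) that is itself in class $\P$, since by the result of Hof, Knill and Simon recalled in the introduction, any fixed point of a class-$\P$ morphism is palindromic, and palindromicity is a property of the language $\L(\bx_\gamma)$, hence invariant under passing to conjugate morphisms generating the same language. Concretely I would compute $\gamma(a)=aca$, $\gamma(b)=cab$, $\gamma(c)=b$ and search among the conjugates $w^{-1}\gamma(\cdot)w$ for one whose letter images all share a common decomposition $p q_\alpha$ into palindromes with a fixed prefix palindrome $p$; equivalently, using the lemma just proved, I would check that the images admit a common point of symmetry $a \bmod |\varphi(\alpha)|$. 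This is a finite verification. If no conjugate of $\gamma$ works directly, I would pass to $\gamma^2$ and repeat. Once such a conjugate is exhibited, the infinitude of palindromes in $\L(\bx_\gamma)$ follows immediately.

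The genuinely hard part is the negative statement, and the key is the phrase in the introduction announcing that the author computes $\Stab(\bx_\gamma)$ explicitly. So the central step is to \emph{determine the stabilizer monoid} $\Stab(\bx_\gamma)$ completely. I would proceed by showing that any $\varphi$ fixing $\bx_\gamma$ must be compatible with the substitutive structure: from $\varphi(\bx_\gamma)=\bx_\gamma=\gamma(\bx_\gamma)$ one reads off strong combinatorial constraints on the images $\varphi(a),\varphi(b),\varphi(c)$ — in particular on their first and last letters via $\Fst$ and $\Lst$, on their lengths (through the Perron eigenvalue of the incidence matrix, forcing the length vector of $\varphi$ to be an eigenvector and hence the lengths to be determined up to the abelianization), and on how blocks must align. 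I expect the argument to show that $\Stab(\bx_\gamma)$ is exactly the cyclic monoid $\{\gamma^n : n\geq 0\}$ generated by $\gamma$ (i.e.\ that $\bx_\gamma$ is rigid), or at worst a small explicitly described monoid. Pinning this down is where the overlap/synchronization analysis of factors lives and is the main obstacle: one must rule out all "unexpected" fixing morphisms, typically by a bounded-return-word or recognizability argument showing that the only self-maps consistent with the prefix-by-prefix matching of $\varphi(\bx_\gamma)$ against $\bx_\gamma$ are the powers of $\gamma$.

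Granting the explicit description of $\Stab(\bx_\gamma)$, the final step is to check that no nonidentity element has a conjugate in class $\P$, which I would reduce to a symmetry obstruction. By the lemma proved above, if $\varphi$ had a conjugate in class $\P$ then all of $\varphi(a),\varphi(b),\varphi(c)$ would be symmetric words sharing a single common point of symmetry $a \bmod |\varphi(\alpha)|$. I would therefore take each nonidentity $\varphi=\gamma^n$ in the stabilizer and exhibit, using the point-of-symmetry machinery (Lemmas~\ref{lem:basicpointofsymmetry}, \ref{lem:conjugatesymmetric}, \ref{lem:twopoints} and Fine--Wilf), that the three images cannot be simultaneously symmetric about a common point: either some $\gamma^n(\alpha)$ fails to be symmetric at all, or the forced symmetry points of the three images are mutually incompatible modulo their respective lengths. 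The role of Lemma~\ref{lem:twopoints} is crucial here, since having two distinct points of symmetry would force a period and hence non-primitivity, so I can use it both to constrain where a single symmetry point may lie and to derive a contradiction when two candidate points are forced. The ternary alphabet is exactly what creates the obstruction: with three letters of differing image lengths there is no room for a common symmetry point, which is why the binary analogue (Theorem~\ref{thm:binary}) succeeds but fails here.
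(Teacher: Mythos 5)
There are two genuine gaps, and the first is fatal to your plan for the ``easy'' half. You propose to prove palindromicity by exhibiting a conjugate of $\gamma$ or of some power $\gamma^k$ in class $\P$; but the paper proves (Lemma~\ref{lem:gammaknoclassP}) that \emph{no} power of $\gamma$ has any conjugate in class $\P$ --- indeed that is the very point of the counterexample, so your search would never terminate successfully. Since $\Fst(\gamma^k)$ and $\Lst(\gamma^k)$ are non-constant and distinct for every $k$, the only conjugate of $\gamma^k$ is itself and its letter images would all have to be palindromes, which they are not. The paper instead proves palindromicity directly, by defining a recursive sequence of palindromes $p_0=b$, $p_{k+1}=\gamma(ca\,p_k)$, showing by induction that each $p_k$ is a palindrome satisfying $p_{k+1}=p_k\,aca\,p_{k-1}\,aca\,p_k$, and that each $p_k$ lies in $\L(\bx_\gamma)$ because every $b$ in $\bx_\gamma$ is preceded by $ca$. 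The word is palindromic ``for a different reason'' than class-$\P$ membership, and your argument has no route to it.

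The second gap is in the stabilizer computation. You expect $\Stab(\bx_\gamma)$ to be the cyclic monoid $\langle\gamma\rangle$, i.e.\ that $\bx_\gamma$ is rigid; it is not. The paper shows $\Stab(\bx_\gamma)=\{\varphi : \varphi\circ\pi=\gamma^k\circ\pi \text{ for some } k\}$ where $\pi: x\mapsto ac,\ y\mapsto ab$, and already for $k=1$ this contains four morphisms (one of them erasing) and for $k=2$ eight; it is even open whether the stabilizer is finitely generated. The mechanism is a parity argument (all of $|\varphi(a)|,|\varphi(b)|,|\varphi(c)|$ have the same parity, and the even case lets $\varphi\circ\pi$ factor through $\pi$) combined with the rigidity of the \emph{Sturmian} word $\bw_\mu$ with $\bx_\gamma=\pi(\bw_\mu)$ and $\gamma\circ\pi=\pi\circ\mu$ --- not a generic recognizability argument. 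Consequently your final step, checking a symmetry obstruction only for $\varphi=\gamma^n$, would miss most of the stabilizer. Your instinct to use Lemma~\ref{lem:twopoints} is the right one, but the paper applies it to $\varphi(ab)=\gamma^k(ab)$ and $\varphi(ac)=\gamma^k(ac)$ (which are pinned down for \emph{every} stabilizing $\varphi$, not just powers of $\gamma$), showing these are primitive symmetric words with known symmetry points ($|p_{k-2}|+5$ and $2$), so that a class-$\P$ conjugate would force a second symmetry point and hence non-primitivity, or a forbidden palindrome $p_{k-3}\,aca\,p_{k-2}$.
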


\subsection{$\bx_\gamma$ is palindromic}

First, we show that the sequence $\bx_\gamma$ is palindromic.
Let $(p_k)_k$ be the sequence of finite words defined recursively by
\begin{equation}
p_{-1}=a^{-1},
\quad
p_0=b
\quad\text{and}\quad
p_{k+1}=\gamma(cap_k) \quad\text{for each}\quad k>0.
\end{equation}
The first terms are $p_0=b$, $p_1=bacacab$ and $p_2=bacacabacabacabacacab$,
all three of them being palindromes.
Also, the first letter of $p_k$ is $b$ for each $k\geq0$.
The next lemma shows that $p_k$ is a sequence of palindromes. Moreover,
$p_{k}\,aca\,p_{k-1}$ is not a palindrome only because it contains
the centered factor $cab$ or $bac$ (the rest is perfectly symmetric with
respect to the center).

\begin{lemma}\label{lem:pk}
For all $k\geq0$, the following statements are verified:
\begin{enumerate}[\rm (i)]
\item $p_{k+1} = p_{k}\, aca\, p_{k-1}\, aca\, p_k$;
\item $p_{k}$ is a palindrome;
\item $p_{k-1}\,aca\,p_{k}$ is not a palindrome.
\end{enumerate}
\end{lemma}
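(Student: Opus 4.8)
The plan is to prove the three statements in the order (i), (ii), (iii), each by induction on $k$ and each feeding into the next; the only genuinely delicate part is (iii), where a direct attempt to locate the single position at which symmetry fails runs forever into the self-similar structure of the $p_k$.

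For (i) I would argue by induction, verifying the two base cases by hand: at $k=0$ the convention $p_{-1}=a^{-1}$ makes the formula read $b\,aca\,a^{-1}\,aca\,b=bacacab=p_1$, and at $k=1$ one computes $p_1\,aca\,p_0\,aca\,p_1=p_2$ directly. The computational heart is the step for $k\geq2$. Starting from the definition $p_{k+1}=\gamma(ca\,p_k)$, substitute the induction hypothesis $p_k=p_{k-1}\,aca\,p_{k-2}\,aca\,p_{k-1}$ and regroup the argument as $ca\,p_k=(ca\,p_{k-1})\,a\,(ca\,p_{k-2})\,a\,(ca\,p_{k-1})$. Applying $\gamma$ and using $\gamma(a)=aca$ together with the defining relations $\gamma(ca\,p_{k-1})=p_k$ and $\gamma(ca\,p_{k-2})=p_{k-1}$ collapses the image to exactly $p_k\,aca\,p_{k-1}\,aca\,p_k$. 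The mechanism is that the single letter $a$ sitting between the blocks is precisely what $\gamma$ expands into the middle $aca$'s, regenerating the claimed decomposition.

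Statement (ii) is then immediate from (i). The base cases $p_0=b$ and $p_1=bacacab$ are palindromes; since $aca$ is a palindrome, the factorization $p_{k+1}=p_k\,aca\,p_{k-1}\,aca\,p_k$ gives $\widetilde{p_{k+1}}=\widetilde{p_k}\,aca\,\widetilde{p_{k-1}}\,aca\,\widetilde{p_k}=p_{k+1}$ whenever $p_{k-1}$ and $p_k$ are palindromes, so the property propagates.

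The hard part is (iii), and the obstacle I anticipate is exactly that $p_{k-1}\,aca\,p_k$ coincides with its reversal along a very long prefix, so no fixed mismatch position can be pinned down uniformly in $k$. The clean route is a reduction rather than a direct computation. By (ii) the reversal of $p_{k-1}\,aca\,p_k$ is $p_k\,aca\,p_{k-1}$, so the word is a palindrome if and only if $p_{k-1}\,aca\,p_k=p_k\,aca\,p_{k-1}$. Substituting $p_k=p_{k-1}\,aca\,p_{k-2}\,aca\,p_{k-1}$ from (i) and cancelling the common prefix $p_{k-1}\,aca$ and the common suffix $aca\,p_{k-1}$, this equality is equivalent to $p_{k-1}\,aca\,p_{k-2}=p_{k-2}\,aca\,p_{k-1}$, which by (ii) is precisely the assertion that $p_{k-2}\,aca\,p_{k-1}$ is a palindrome. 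Thus ``$p_{k-1}\,aca\,p_k$ is a palindrome'' is equivalent to the same statement one level lower, and an induction on $k\geq1$ finishes: the base case $p_0\,aca\,p_1=bacabacacab$ is checked by hand not to be a palindrome, and the equivalence pushes this failure up to every level. Throughout, I would note that the formal symbol $p_{-1}=a^{-1}$ serves only to make (i) read uniformly at $k=0$; the inductive steps of (i) and (iii) are applied at levels $k\geq2$, where every $p_j$ with $j\geq0$ occurring is an honest element of $\alphabet^*$, so all cancellations and all applications of $\gamma$ are legitimate.
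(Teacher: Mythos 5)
Your proposal is correct and follows essentially the same route as the paper: (i) by induction applying $\gamma$ to the regrouped decomposition of $ca\,p_k$, (ii) as an immediate consequence, and (iii) by reducing palindromicity of $p_{k-1}\,aca\,p_k$ to the same statement at an adjacent level (you cancel a common prefix and suffix after substituting (i), the paper identifies the middle part of $p_k\,aca\,p_{k+1}$ — the same equivalence) and then invoking the hand-checked failure at $p_0\,aca\,p_1$. The only nitpick is that the lemma is stated for all $k\geq 0$, so (iii) also asks about $p_{-1}\,aca\,p_0=cab$, a trivial base case the paper records and you omit.
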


\begin{proof}
(i)
Notice that $p_1=p_0\cdot aca\cdot p_{-1}\cdot aca\cdot
p_0=bacaa^{-1}acab=bacacab$ and $p_2=p_1\cdot aca\cdot p_0\cdot aca\cdot p_1$.
Suppose that $p_{k}=p_{k-1}\cdot aca\cdot p_{k-2}\cdot aca\cdot p_{k-1}$, for
some $k\geq 2$.
Then,
\begin{eqnarray*}
p_{k+1}
&=& \gamma(ca p_k) \\
&=& \gamma(ca\, p_{k-1}\, aca\, p_{k-2}\, aca\, p_{k-1})\\
&=& \gamma(ca p_{k-1})\, \gamma(a)\, \gamma(ca p_{k-2})\, \gamma(a)\, \gamma(ca p_{k-1}) \\
&=& p_{k}\, aca\, p_{k-1}\, aca\, p_k.
\end{eqnarray*}

(ii) We remark that $p_0$ and $p_1$ are palindromes. Also, from (i), if
$p_{k-1}$ and $p_k$ are palindromes, then $p_{k+1}$ is a palindrome.
Therefore $p_{k}$ is a palindrome for all $k\geq 0$.

(iii)
First, $p_{-1}\,aca\,p_0=a^{-1}acab=cab$ and $p_0\,aca\,p_1=bacabacacab$ are
not palindromes.
Suppose $p_{k-1}\,aca\,p_{k}$ is not a palindrome for some $k\geq 1$.
Then $p_{k}\,aca\,p_{k+1}=p_{k}\,aca\,p_{k}\,aca\,p_{k-1}\,aca\,p_{k}$ is a
palindrome if and only if the middle part $p_{k}\,aca\,p_{k-1}$ is a
palindrome. But, it is not being the reversal of $p_{k-1}\,aca\,p_{k}$.
\end{proof}

\begin{lemma}\label{lem:infty}
$\bx_\gamma$ is palindromic.
\end{lemma}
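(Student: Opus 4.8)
The plan is to read off palindromicity directly from the family $(p_k)_k$ supplied by Lemma~\ref{lem:pk}, rather than hunting for palindromes in $\bx_\gamma$ by hand. By part~(ii) each $p_k$ is a palindrome, and by part~(i), for $k\geq 1$ we have $|p_{k+1}| = 2|p_k| + |p_{k-1}| + 6 > |p_k|$, so the lengths $|p_k|$ are strictly increasing and tend to infinity. Since $\bx_\gamma$ is palindromic precisely when $\L(\bx_\gamma)$ contains arbitrarily long palindromes, it therefore suffices to establish the one remaining fact: every $p_k$ occurs as a factor of $\bx_\gamma$.

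The main tool is that $\gamma$ sends factors of $\bx_\gamma$ to factors of $\bx_\gamma$. Indeed, if $\bx_\gamma = s\,w\,t$ with $s$ a finite prefix and $w$ a finite factor, then, using that $\gamma$ is nonerasing and that $\bx_\gamma$ is a fixed point, $\bx_\gamma = \gamma(\bx_\gamma) = \gamma(s)\,\gamma(w)\,\gamma(t)$, so $\gamma(w) \in \L(\bx_\gamma)$. The subtlety is that the recursion $p_{k+1} = \gamma(ca\,p_k)$ produces $p_{k+1}$ from $ca\,p_k$ and not from $p_k$ itself, so the naive induction on the statement ``$p_k \in \L(\bx_\gamma)$'' does not propagate. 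I would get around this by tracking a slightly longer word. Using $\gamma(a) = aca$ and regrouping, $\gamma(aca\,p_k) = \gamma(a)\,\gamma(ca\,p_k) = aca\,p_{k+1}$, which pins down the right invariant: \emph{$aca\,p_k$ is a factor of $\bx_\gamma$.}

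With this invariant the induction is immediate. For the base case, $aca\,p_0 = acab$ is the prefix of $\bx_\gamma$, hence a factor. For the inductive step, assuming $aca\,p_k \in \L(\bx_\gamma)$, factor-preservation gives $\gamma(aca\,p_k) \in \L(\bx_\gamma)$, and the computation above identifies this word as $aca\,p_{k+1}$; hence $aca\,p_{k+1} \in \L(\bx_\gamma)$, closing the induction. Consequently $aca\,p_k$, and in particular the palindrome $p_k$, lies in $\L(\bx_\gamma)$ for every $k\geq 0$. As the $p_k$ are palindromes of unbounded length, $\bx_\gamma$ is palindromic.

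The only genuine obstacle is choosing the strengthened hypothesis correctly: one must prepend exactly the block $aca = \gamma(a)$, so that applying $\gamma$ regenerates the same prefix in front of $p_{k+1}$ and the induction recurs; prepending merely $ca$, or nothing, breaks the recursion. Everything else is a one-line computation, and—unlike the rest of the paper—this lemma needs no information about $\Stab(\bx_\gamma)$ or about class~$\P$.
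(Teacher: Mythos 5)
Your proof is correct and follows essentially the same route as the paper: both show by induction, using the recursion $p_{k+1}=\gamma(ca\,p_k)$ together with the fact that $\gamma$ maps $\L(\bx_\gamma)$ into itself, that the arbitrarily long palindromes $p_k$ of Lemma~\ref{lem:pk} occur in $\bx_\gamma$. The only difference is in how the induction is closed: where the paper observes that every $b$ in $\bx_\gamma$ is preceded by $ca$ (so $p_k\in\L(\bx_\gamma)$ yields $ca\,p_k\in\L(\bx_\gamma)$), you carry the strengthened invariant $aca\,p_k\in\L(\bx_\gamma)$, which is self-contained and avoids having to justify that left-extension property.
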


\begin{proof}
We show that the infinite sequence $(p_k)_{k\geq0}$ shown to be palindromes at
Lemma~\ref{lem:pk} are included in $\L(\bx_\gamma)$.
Note that the first letter of $p_k$ is $b$ for each $k\geq0$.
Since each letter $b$ is preceded by the factor $ca$ in the language of
$\bx_\gamma$, we conclude that if $p_k\in\L(\bx_\gamma)$, then
$cap_k\in\L(\bx_\gamma)$ so that $p_{k+1}=\gamma(cap_k)\in\L(\bx_\gamma)$.
Therefore, $p_k\in\L(\bx_\gamma)$ for all $k\geq0$.
\end{proof}

Note that the previous two results are quite interesting. For a morphism
$\varphi:\alpha\mapsto pq_\alpha$ in class~$\P$, one shows that if $w$ is a
palindrome, then $\varphi(w)p$ is also a palindrome. Here, the result says
that prepending the non palindromic word $baca$ to $\gamma(p_k)$ makes it a
new longer palindrome. This is a first indication that $\bx_\gamma$ contains
an infinite number of palindromes for a different reason than for well known
fixed point of class $\P$ morphisms.

\subsection{The stabilizer of $\bx_\gamma$}

In this section, we describe exactly the stabilizer $\Stab(\bx_\gamma)$ of the
sequence $\bx_\gamma$. This is a hard problem in general.
Over a ternary alphabet, the monoid of morphisms generating a given infinite
word by iteration can be infinitely generated \cite{MR2424350}.
To achieve this, we show that the purely morphic sequence $\bx_\gamma$ defined
above is equal to the morphic sequence $\pi(\bw_\mu)$ where
\[
\mu\left\{
\begin{array}{l}
x \mapsto xy\\
y \mapsto xxy
\end{array}\right.,
\quad\quad
\quad\quad
\pi\left\{
\begin{array}{l}
x \mapsto ac\\
y \mapsto ab
\end{array}\right.
\]
and $\bw_\mu$ is the infinite fixed point of $\mu$ starting with letter $x$:
\[
\bw_\mu = xyxxyxyxyxxyxyxxyxyxxyxyxyxxyxyxxyxyxyxx \ldots
\]
The finite prefix of $\bx_\gamma$ shown earlier seems to indicate that the
letter $a$ appears exactly at the even positions and that $b$ and $c$ letters
appears at odd positions. This can be formulated by saying that $\bx_\gamma$
is the image by the morphism $\pi$ of some infinite word over $\{x,y\}$. The
next lemma shows that this infinite word is unique and is $\bw_\mu$.

\begin{lemma}\label{lem:corresondancexw}
We have $\gamma\circ\pi = \pi\circ\mu$.
There exists a unique infinite word $\bw$ such that
$\bx_\gamma=\pi(\bw)$. Moreover, $\bw=\bw_\mu$.
\end{lemma}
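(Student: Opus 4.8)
The plan is to handle the three assertions in the order stated, using the commutation identity $\gamma\circ\pi=\pi\circ\mu$ as the engine that drives the other two. First I would prove this identity by checking it on the two letters of $\{x,y\}$, since morphisms that agree on every letter agree on every word. Concretely one computes $\gamma(\pi(x))=\gamma(ac)=aca\cdot b=acab=\pi(xy)=\pi(\mu(x))$ and $\gamma(\pi(y))=\gamma(ab)=aca\cdot cab=acacab=\pi(xxy)=\pi(\mu(y))$. This is routine and presents no real difficulty.

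Next, for uniqueness I would observe that $\pi$ is a uniform coding of length $2$ whose images $\pi(x)=ac$ and $\pi(y)=ab$ are distinct; hence $\{ac,ab\}$ is a code (in fact a prefix code), and $\pi$ is injective on $\{x,y\}^\N$. Indeed, if $\pi(\bv)=\pi(\bv')$ for infinite words $\bv,\bv'$, then splitting both sides into consecutive blocks of length $2$ yields $\pi(v_i)=\pi(v'_i)$ for every $i$, and since $ac\neq ab$ this forces $v_i=v'_i$. Therefore at most one infinite word $\bw$ can satisfy $\bx_\gamma=\pi(\bw)$, which already settles the uniqueness claim.

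For existence together with the identification $\bw=\bw_\mu$, I would show in one stroke that $\pi(\bw_\mu)=\bx_\gamma$. Using the commutation identity and the fact that $\bw_\mu$ is fixed by $\mu$, we get $\gamma(\pi(\bw_\mu))=\pi(\mu(\bw_\mu))=\pi(\bw_\mu)$, so $\pi(\bw_\mu)$ is a fixed point of $\gamma$. Moreover it begins with $\pi(x)=ac$, hence with the letter $a$. Since $\gamma$ is prolongable at $a$ (because $\gamma(a)=aca$), it admits a unique fixed point starting with $a$, namely $\bx_\gamma=\lim_n\gamma^n(a)$; consequently $\pi(\bw_\mu)=\bx_\gamma$. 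Taking $\bw=\bw_\mu$ thus gives existence, and combined with the previous step this simultaneously establishes $\bw=\bw_\mu$, completing all three parts.

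The only place requiring a little care, which I regard as the main (and minor) obstacle, is justifying the appeal to uniqueness of the fixed point: one must note that any infinite word fixed by $\gamma$ and beginning with $a$ necessarily equals $\lim_n\gamma^n(a)$, because $\gamma^n(a)$ is one of its prefixes for every $n$. Everything else—the letter-wise verification of the commutation identity and the code property of $\pi$—is elementary. It is worth remarking that this route also explains the parity observation made before the lemma (that $a$ occupies exactly the even positions), since being in the image of $\pi$ means precisely that $\bx_\gamma$ decomposes into the length-$2$ blocks $ac$ and $ab$.
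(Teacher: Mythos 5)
Your proof is correct, and while it verifies the commutation identity and the code property of $\{ac,ab\}$ exactly as the paper does, it organizes the existence step in a genuinely different way. The paper first asserts that $\bx_\gamma$ decomposes into length-$2$ blocks $ac$ and $ab$ (this is where existence of $\bw$ comes from), then applies $\gamma\circ\pi=\pi\circ\mu$ to the equation $\bx_\gamma=\gamma(\bx_\gamma)=\pi(\mu(\bw))$ and cancels $\pi$ to deduce $\bw=\mu(\bw)$, hence $\bw=\bw_\mu$; the direction is ``desubstitute $\bx_\gamma$, then identify the preimage.'' You go the other way: you show directly that $\pi(\bw_\mu)$ is a fixed point of $\gamma$ beginning with $a$, and invoke uniqueness of the fixed point of a morphism prolongable at $a$ to conclude $\pi(\bw_\mu)=\bx_\gamma$. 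Your route has the advantage that it does not rely on the block-decomposition claim, which the paper states without justification (it would itself require a small induction showing each $\gamma^n(ac)$ is a product of the blocks $ac$ and $ab$); instead you only need the standard fact that $\gamma^n(a)$ is a prefix of any fixed point starting with $a$, which you correctly flag as the one point needing care. The paper's route, in exchange, produces the word $\bw$ before knowing anything about $\mu$, which matches the narrative set up just before the lemma about $a$ occupying the even positions. Both arguments are complete and either would serve.
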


\begin{proof}
First, we verify that $\gamma\circ\pi = \pi\circ\mu$. It is sufficient to
check it for letters:
\[
\begin{array}{c}
\gamma(\pi(x))=\gamma(ac)=acab = \pi(xy)=\pi(\mu(x)),\\
\gamma(\pi(y))=\gamma(ab)=acacab = \pi(xxy)=\pi(\mu(y)).
\end{array}
\]
The word $\bx_\gamma$ is an infinite product of two kind of blocks of lengths
$2$ namely $ac$ and $ab$.
Since $\{ac, ab\}$ forms a code,
there exists a unique infinite word $\bw$ such that
$\pi(\bw)=\bx_\gamma$. But, using $\gamma\circ\pi = \pi\circ\mu$, we get
\[
\bx_\gamma = \gamma(\bx_\gamma) = \gamma(\pi(\bw)) = \pi(\mu(\bw)).
\]
From the injectivity of $\pi$, we conclude that $\bw=\mu(\bw)$ is a fixed
point of $\mu$, i.e., $\bw=\bw_\mu$.
\end{proof}

\begin{lemma}\label{lem:oddoreven}
Let $\varphi\in\Stab(\bx_\gamma)$.
Then, $|\varphi(a)|$, $|\varphi(b)|$ and $|\varphi(c)|$ are all odd or all
even.
\end{lemma}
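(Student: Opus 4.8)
The plan is to exploit the rigid two-letter block structure of $\bx_\gamma$ coming from Lemma~\ref{lem:corresondancexw}. Write $\bx_\gamma=y_0y_1y_2\cdots$. Since $\bx_\gamma=\pi(\bw_\mu)$ and both $\pi(x)=ac$ and $\pi(y)=ab$ have length $2$ and begin with $a$, the letter at position $p$ of $\bx_\gamma$ is $a$ if and only if $p$ is even, while positions of odd index carry a letter of $\{b,c\}$; moreover both $b$ and $c$ actually occur (as $\gamma$ is primitive), hence both occur at odd positions. For $\varphi\in\Stab(\bx_\gamma)$ set $\ell_\alpha=|\varphi(\alpha)|\bmod 2$ for $\alpha\in\alphabet$, and let $L_n=\sum_{j<n}|\varphi(y_j)|$ be the starting position, inside $\bx_\gamma=\varphi(\bx_\gamma)$, of the block $\varphi(y_n)$. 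The goal is to show $\ell_a=\ell_b=\ell_c$.

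First I would pin down the first letter of $\varphi(a)$. Comparing the first letters of the two sides of $\varphi(\bx_\gamma)=\bx_\gamma$ and using $y_0=a$ gives $\Fst(\varphi)(a)=a$, i.e. $\varphi(a)$ begins with $a$. Now for every even index $n=2i$ we have $y_{2i}=a$, so the block $\varphi(a)$ occurs in $\bx_\gamma$ starting at position $L_{2i}$ and its first letter, an $a$, sits at position $L_{2i}$. Because $a$ occupies exactly the even positions of $\bx_\gamma$, this forces $L_{2i}$ to be even for every $i$. The parity conclusion is then immediate: for consecutive even indices $L_{2i+2}-L_{2i}=|\varphi(a)|+|\varphi(y_{2i+1})|$ is a difference of even numbers, hence even, so $\ell_a\equiv\ell_{y_{2i+1}}\pmod 2$. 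As $i$ varies, $y_{2i+1}$ takes both values $b$ and $c$, whence $\ell_a=\ell_b=\ell_c$; that is, $|\varphi(a)|$, $|\varphi(b)|$, $|\varphi(c)|$ are all odd or all even.

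The one gap in this argument, and the main obstacle, is that the first-letter step needs $\varphi(a)\neq\varepsilon$; a priori $\Stab(\bx_\gamma)$ might contain erasing morphisms, for which ``the first letter of $\varphi(a)$'' is meaningless. To make the proof robust I would replace the first-letter device by a counting invariant that tolerates empty images: for a finite word $w$ put $\delta(w)=\#_a(w)-\#_{\bar a}(w)$, so that $\delta$ is additive and $\delta$ of a prefix of $\bx_\gamma$ of length $N$ equals $N\bmod 2$. Applying this to the prefixes $\varphi(y_0\cdots y_{n-1})$ gives $\delta(\varphi(y_n))=(L_{n+1}\bmod 2)-(L_n\bmod 2)$, so that $\delta(\varphi(\alpha))\in\{-1,0,+1\}$, that $\delta(\varphi(\alpha))=0$ exactly when $\ell_\alpha=0$, and that when $\ell_\alpha=1$ the (then nonzero, fixed) value of $\delta(\varphi(\alpha))$ forces the parity of $L_n$ to be the same at every occurrence of $\alpha$.

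If $\ell_a=1$ this fixes the parity of $L_{2i}$ for all $i$ and the computation above goes through verbatim. If instead $\ell_a=0$, then assuming $\ell_b=1$ (or $\ell_c=1$) would force the number of $x$'s between two consecutive $y$'s of $\bw_\mu$ (resp. the number of $y$'s between consecutive $x$'s) to be odd throughout, contradicting the presence in $\bw_\mu$ of the factor $xx$, i.e. of two adjacent $x$'s realizing an even gap. Either way the three parities coincide. I expect this erasing/parity bookkeeping to be the only delicate point; the structural input—that $a$ marks exactly the even positions while both $b$ and $c$ appear at odd positions—is the heart of the matter.
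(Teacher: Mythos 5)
Your proof is correct and rests on the same key fact as the paper's: the letter $a$ occupies exactly the even positions of $\bx_\gamma$, so the images under $\varphi$ must respect position parity --- the paper implements this as a short first-/last-letter chase along the prefix $\varphi(acab)$, while you implement it globally via the starting positions $L_n$ and the counting invariant $\delta$. Your explicit treatment of possibly erasing morphisms (e.g.\ $\gamma_0$, for which $\gamma_0(a)=\emptyword$) is a worthwhile refinement, since the paper's opening claim that ``$\varphi(a)$ must start with $a$'' silently assumes $\varphi(a)\neq\emptyword$.
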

\begin{proof}
We have that $\varphi(a)$ is a prefix of $\bx_\gamma$ and it must start with
$a$. Therefore, $\varphi(b)$ and $\varphi(c)$ both do not end with $a$.

If $|\varphi(a)|$ is odd, then $\varphi(a)$ ends with letter $a$, $\varphi(c)$
does not start with letter $a$ so that $|\varphi(c)|$ is odd. Then
$|\varphi(aca)|$ is odd and hence $\varphi(b)$ does not start with letter $a$
from what we get that $|\varphi(b)|$ is odd as well.

If $|\varphi(a)|$ is even, then $\varphi(a)$ does no end with letter $a$, $\varphi(c)$
starts with letter $a$ so that $|\varphi(c)|$ is even. Then
$|\varphi(aca)|$ is even and hence $\varphi(b)$ starts with letter $a$
from what we get that $|\varphi(b)|$ is even as well.
\end{proof}

\noindent
The morphism $\mu$ is sturmian being factorized as $\mu=(x\mapsto xy,y\mapsto
x)\circ(x\mapsto x,y\mapsto yx)$.
Therefore, the sequence $\bw_\mu$ is rigid. This is what allows us to describe
exactly the stabilizer of~$\bx_\gamma$.

\begin{proposition}\label{prop:stab}
$\Stab(\bx_\gamma) = \{\varphi :\text{there exists an integer $k$ such
that }\varphi\circ\pi = \gamma^k\circ\pi\}$.
\end{proposition}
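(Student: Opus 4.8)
The plan is to prove the two inclusions separately, transporting the problem from the ternary word $\bx_\gamma$ to the Sturmian word $\bw_\mu$ via the conjugacy relation $\gamma\circ\pi=\pi\circ\mu$ of Lemma~\ref{lem:corresondancexw}, and then invoking rigidity of $\bw_\mu$. The inclusion $\supseteq$ is immediate: if $\varphi\circ\pi=\gamma^k\circ\pi$, then grouping $\bx_\gamma=\pi(\bw_\mu)$ into its $\pi$-blocks $\pi((\bw_\mu)_0)\pi((\bw_\mu)_1)\cdots$ and applying $\varphi$ blockwise gives $\varphi(\bx_\gamma)=\gamma^k(\pi(\bw_\mu))=\gamma^k(\bx_\gamma)=\bx_\gamma$, so $\varphi\in\Stab(\bx_\gamma)$; the blocks $\gamma^k(\pi(\cdot))$ are nonempty, so the computation is valid on the infinite word.

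For the converse, I would take $\varphi\in\Stab(\bx_\gamma)$ and aim first at the key structural step: showing that $\varphi(ac)=\varphi(\pi(x))$ and $\varphi(ab)=\varphi(\pi(y))$ both lie in $\pi(\{x,y\}^*)$, i.e. each is an even-length word whose occurrences of $a$ sit exactly at the even positions. Factor the fixed-point identity $\bx_\gamma=\varphi(\bx_\gamma)$ into $\varphi$-blocks $\varphi((\bx_\gamma)_0)\varphi((\bx_\gamma)_1)\cdots$ with boundaries $\ell_n=\sum_{j<n}|\varphi((\bx_\gamma)_j)|$. Since $\bx_\gamma=\pi(\bw_\mu)$ has the letter $a$ at exactly the even positions, the factor at each even position $2i$ is one of the $\pi$-blocks $ac$ or $ab$, and the corresponding $\varphi$-block $\varphi((\bx_\gamma)_{2i})\varphi((\bx_\gamma)_{2i+1})$ equals $\varphi(ac)$ or $\varphi(ab)$ and begins at $\ell_{2i}$. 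By Lemma~\ref{lem:oddoreven} the three lengths $|\varphi(a)|,|\varphi(b)|,|\varphi(c)|$ have a common parity, so every $\ell_n$ is even (even case) or $\ell_n\equiv n\pmod 2$ (odd case); in both cases $\ell_{2i}$ is even. Thus each block $\varphi(ac),\varphi(ab)$ has even length and starts at an even position of $\bx_\gamma$, which forces its $a$'s to occupy precisely its internal even positions — exactly the membership condition for $\pi(\{x,y\}^*)$.

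Granting this, injectivity of the code $\pi$ lets me define a morphism $\psi:\{x,y\}^*\to\{x,y\}^*$ by $\pi(\psi(x))=\varphi(ac)$ and $\pi(\psi(y))=\varphi(ab)$, so that $\varphi\circ\pi=\pi\circ\psi$. Evaluating on $\bw_\mu$ gives $\pi(\psi(\bw_\mu))=\varphi(\pi(\bw_\mu))=\varphi(\bx_\gamma)=\bx_\gamma=\pi(\bw_\mu)$, and injectivity of $\pi$ on infinite words yields $\psi(\bw_\mu)=\bw_\mu$, i.e. $\psi\in\Stab(\bw_\mu)$. As $\mu$ is Sturmian, $\bw_\mu$ is rigid, so $\Stab(\bw_\mu)$ is cyclic; since $\mu$ is not a proper power of any morphism (a short length argument on $|\mu(x)|=2$), its generator is $\mu$ itself, hence $\psi=\mu^k$ for some $k\geq0$. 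Iterating $\gamma\circ\pi=\pi\circ\mu$ gives $\pi\circ\mu^k=\gamma^k\circ\pi$, so $\varphi\circ\pi=\pi\circ\mu^k=\gamma^k\circ\pi$, as required.

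The main obstacle is the structural step of the second paragraph, namely controlling where $\varphi(ac)$ and $\varphi(ab)$ sit inside $\bx_\gamma$. Everything hinges on combining the rigid positional pattern of $a$ (even positions only) with the common-parity constraint of Lemma~\ref{lem:oddoreven}: together they guarantee that the decoding $\pi^{-1}\circ\varphi\circ\pi$ is well defined as a morphism on $\{x,y\}$, after which rigidity of $\bw_\mu$ closes the argument almost mechanically.
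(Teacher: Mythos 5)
Your proof is correct and takes essentially the same route as the paper's: decode $\varphi$ through the code $\{ac,ab\}=\pi(\{x,y\})$ into a morphism $\sigma$ on $\{x,y\}^*$ satisfying $\varphi\circ\pi=\pi\circ\sigma$, conclude $\sigma\in\Stab(\bw_\mu)$ by injectivity of $\pi$, and finish with rigidity of the Sturmian word $\bw_\mu$ and the intertwining relation $\pi\circ\mu^k=\gamma^k\circ\pi$. Your second paragraph in fact spells out, via the even block boundaries $\ell_{2i}$, a step the paper leaves implicit (why even length of $\varphi(ac)$ and $\varphi(ab)$ together with their occurrence at even positions forces them into $\pi(\{x,y\}^*)$).
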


\begin{proof}
Let $\varphi\in\Stab(\bx_\gamma)$.
From Lemma~\ref{lem:oddoreven}, $\varphi(ac)$ and $\varphi(ab)$ both have an
even length. Therefore, there exist words $u,v\in\{x,y\}^*$ such that
$\varphi(\pi(x)) = \varphi(ac) = \pi(u)$ and
$\varphi(\pi(y)) = \varphi(ab) = \pi(v)$.
Since the images of letters by $\pi$ is a code, the words $u$ and $v$ are
uniquely determined. Hence, let $\sigma:x\mapsto u,y\mapsto v$. We have that
$\varphi\circ\pi=\pi\circ\sigma$.
From Lemma~\ref{lem:corresondancexw}, $\bx_\gamma=\pi(\bw_\mu)$. We get
\[
\bx_\gamma = \varphi(\bx_\gamma) = \varphi(\pi(\bw_\mu)) = \pi(\sigma(\bw_\mu)).
\]
From the injectivity of $\pi$, we get that
$\bw_\mu=\sigma(\bw_\mu)$, that is $\sigma\in\Stab(\bw_\mu)$.
Since the fixed point $\bw_\mu$ is sturmian and rigid, more precisely
$\Stab(\bw_\mu)=\langle\mu\rangle$, there exists an integer $k$ such that
$\sigma=\mu^k$. Finally, from Lemma~\ref{lem:corresondancexw}, we have
$\pi\circ\mu^k = \gamma^k\circ\pi$.
Thus, $\varphi\circ\pi=\pi\circ\sigma = \pi\circ\mu^k = \gamma^k\circ\pi$.

Reciprocally, suppose $\varphi$ is such that $\varphi\circ\pi =
\gamma^k\circ\pi$. Then,
\[
\varphi(\bx_\gamma) = \varphi(\pi(\bw_\mu)) = \gamma^k(\pi(\bw_\mu)) =
\gamma^k(\bx_\gamma) = \bx_\gamma.
\]
Thus, $\varphi\in\Stab(\bx_\gamma)$.
\end{proof}

In other words, $\varphi\in\Stab(\bx_\gamma)$ if and only if
$\varphi(ac)=\gamma^k(ac)$ and $\varphi(ab)=\gamma^k(ab)$ for some integer
$k\geq 0$. If $k=0$, then $\varphi$ is the identity morphism. If $k=1$, then
$\varphi$ is one of the following four morphisms:
\[
\gamma_0\left\{
\begin{array}{l}
a \mapsto \emptyword\\
b \mapsto acacab\\
c \mapsto acab
\end{array}\right.,\quad
\gamma_1\left\{
\begin{array}{l}
a \mapsto a\\
b \mapsto cacab\\
c \mapsto cab
\end{array}\right.,\quad
\gamma_2\left\{
\begin{array}{l}
a \mapsto ac\\
b \mapsto acab\\
c \mapsto ab
\end{array}\right.,\quad
\gamma=\gamma_3\left\{
\begin{array}{l}
a \mapsto aca\\
b \mapsto cab\\
c \mapsto b
\end{array}\right..
\]
The indices for $\gamma$ are chosen according to the length of the image of
$a$.
If $k=2$, then $\varphi$ is one of the following eight morphisms:
\[
\gamma_0^2\left\{
\begin{array}{l}
a \mapsto \emptyword\\
b \mapsto acabacabacacab\\
c \mapsto acabacacab
\end{array}\right.,
\gamma_1^2\left\{
\begin{array}{l}
a \mapsto a\\
b \mapsto cabacabacacab\\
c \mapsto cabacacab
\end{array}\right.,
\gamma_2\gamma_1\left\{
\begin{array}{l}
a \mapsto ac\\
b \mapsto abacabacacab\\
c \mapsto abacacab
\end{array}\right.,
\]
\[
\gamma_3\gamma_1\left\{
\begin{array}{l}
a \mapsto aca\\
b \mapsto bacabacacab\\
c \mapsto bacacab
\end{array}\right.,
\gamma_2^2\left\{
\begin{array}{l}
a \mapsto acab\\
b \mapsto acabacacab\\
c \mapsto acacab
\end{array}\right.,
\gamma_1\gamma_3\left\{
\begin{array}{l}
a \mapsto acaba\\
b \mapsto cabacacab\\
c \mapsto cacab
\end{array}\right.,
\]
\[
\gamma_2\gamma_3\left\{
\begin{array}{l}
a \mapsto acabac\\
b \mapsto abacacab\\
c \mapsto acab
\end{array}\right.,
\gamma_3^2\left\{
\begin{array}{l}
a \mapsto acabaca\\
b \mapsto bacacab\\
c \mapsto cab
\end{array}\right..
\]
However, $\{\gamma_0, \gamma_1, \gamma_2, \gamma_3\}$ do not form a set of
generators for $\Stab(\bx_\gamma)$ as there are two counterexamples when $k=3$.
Both
\[
\begin{array}{l}
a\mapsto acabacac, b\mapsto abacabacacabacabacabacacab, c\mapsto abacabacabacacab,\\
a\mapsto acabacaca, b\mapsto bacabacacabacabacabacacab, c\mapsto bacabacabacacab
\end{array}
\]
satisfy $\varphi(ac)=\gamma^3(ac)$ and $\varphi(ab)=\gamma^3(ab)$ but they are
not in $\langle \gamma_0, \gamma_1\gamma_2, \gamma_3\rangle$.


\subsection{Proof of Theorem~\ref{thm:main}}

First we show that the powers of $\gamma$ itself do not have any conjugate in
class~$\P$. This implies that results like Theorem~\ref{thm:binary} or
Theorem~\ref{thm:binaryuniform} are not possible beyond the binary alphabet.

\begin{lemma}\label{lem:gammaknoclassP}
For all $k\geq1$, $\gamma^k$ does not have any conjugate in class $\P$.
\end{lemma}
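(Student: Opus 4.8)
The plan is to convert the class-$\P$ hypothesis into a statement about symmetric words and then exploit the rigid even/odd position structure of $\bx_\gamma$. By the lemma stating that a morphism with a conjugate in class $\P$ has all its letter-images symmetric (sharing a common point of symmetry), it suffices, for each $k\geq 1$, to exhibit a single letter whose image under $\gamma^k$ fails to be symmetric. First I would dispose of the two small cases. Since $\gamma(c)=b$ we have $\gamma^k(c)=\gamma^{k-1}(b)$, and a direct check shows that $\gamma(b)=cab$ admits no factorization into two palindromes, so it is not symmetric. Hence $\gamma^1(b)=cab$ handles $k=1$ and $\gamma^2(c)=\gamma(b)=cab$ handles $k=2$.

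The heart of the argument is the letter $a$ for $k\geq 3$. Writing $A_k=\gamma^k(a)$, I would observe that $A_k$ is a prefix of $\bx_\gamma$ (because $\gamma$ is prolongable at $a$) and that $A_k$ begins and ends with $a$. Combined with Lemma~\ref{lem:corresondancexw}, which gives $\bx_\gamma=\pi(\bw_\mu)$ with $\pi(x)=ac$ and $\pi(y)=ab$, this shows that the letter $a$ occupies exactly the even positions of $\bx_\gamma$; consequently $A_k$ has odd length $n$ (its last position carries an $a$, hence is even) and its $a$'s are precisely the even positions $\{0,2,\dots,n-1\}$.

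The key sublemma I would then prove is that such a word is symmetric \emph{if and only if} it is a palindrome. Indeed, a point of symmetry $s$ forces $w_s=w_0=a$, so $s$ is even, and the reflection $i\mapsto (s-i)\bmod n$ must preserve the set of even positions. Testing $i=n-1$ (even, since $n$ is odd) gives $(s-(n-1))\bmod n=(s+1)\bmod n$, which equals the odd number $s+1$ whenever $s<n-1$; this sends an even position to an odd one, a contradiction. Thus the only possible point of symmetry is $s=n-1$, i.e.\ $A_k$ is a palindrome.

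Finally I would show $A_k$ is not a palindrome for $k\geq 3$. From $\gamma(a)=aca$ one gets $A_{k+1}=A_k\,\gamma^{k-1}(b)\,A_k$, and matching this word against its reversal block by block shows $A_{k+1}$ is a palindrome iff both $A_k$ and $\gamma^{k-1}(b)$ are palindromes. Since $\gamma^1(b)=cab$ is not a palindrome, $A_3=A_2\,\gamma^1(b)\,A_2$ fails to be one, and then an immediate induction (any word $A_k\,w\,A_k$ with $A_k$ non-palindromic is itself non-palindromic) propagates this to every $k\geq 3$. By the sublemma, $A_k$ is therefore not symmetric for $k\geq 3$, completing all cases. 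I expect the main obstacle to be the cyclic bookkeeping in the sublemma: a point of symmetry is defined modulo $n$, so position parity is generally destroyed by the cyclic reflection, and one must isolate precisely the wrap-around index $i=n-1$ that yields the contradiction.
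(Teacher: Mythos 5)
Your proof is correct, but it takes a genuinely different route from the paper's. The paper's argument is a short bookkeeping step on first and last letters: it computes $\Fst(\gamma^k)$ ($[a,b,c]$ for even $k$, $[a,c,b]$ for odd $k$) and $\Lst(\gamma^k)=[a,b,b]$, observes that since neither list is constant the morphism $\gamma^k$ admits no conjugate other than itself, and then notes that for $\gamma^k$ itself to be in class~$\P$ the common palindromic prefix $p$ would have to be $\emptyword$ (the first letters of the images differ), forcing every letter image to be a palindrome --- impossible since $\Fst(\gamma^k)\neq\Lst(\gamma^k)$. You instead invoke the lemma that a conjugate in class~$\P$ forces every letter image to be \emph{symmetric}, and you defeat symmetry directly: for $k=1,2$ via the non-symmetric word $cab$, and for $k\geq 3$ via the parity structure of $\bx_\gamma=\pi(\bw_\mu)$ (the letter $a$ sits exactly at even positions), which pins any point of symmetry of $\gamma^k(a)$ to $n-1$ and reduces symmetry to palindromicity, then via the recursion $\gamma^{k+1}(a)=\gamma^k(a)\,\gamma^{k-1}(b)\,\gamma^k(a)$ to rule palindromicity out. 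All of these steps check out (in particular the cyclic-reflection argument isolating $i=n-1$ is sound, and $\gamma^2(a)=acabaca$ being a palindrome is correctly bypassed by starting at $k=3$). What the paper's approach buys is brevity and no reliance on the fixed point's structure; what yours buys is that it exercises exactly the symmetric-word and point-of-symmetry machinery that the paper saves for the harder Proposition on the full stabilizer, so it is a natural warm-up for that argument even though it is heavier here.
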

\begin{proof}
For all even $k$, $\Fst(\gamma^k)=[a,b,c]$ and $\Lst(\gamma^k)=[a,b,b]$.
For all odd $k$, $\Fst(\gamma^k)=[a,c,b]$ and $\Lst(\gamma^k)=[a,b,b]$.
Therefore, $\gamma^k$ does not have conjugates other than itself. Also,
$\Fst(\gamma^k)$ contains more then one letter (similarly for
$\Lst(\gamma^k)$), so that the image of each letter by $\gamma^k$ must be a
palindrome in order to be in class $\P$. But it is not the case since
$\Fst(\gamma^k)\neq\Lst(\gamma^k)$ for all~$k$.
\end{proof}

\noindent
We show that the words $\gamma^k(ab)$ and $\gamma^k(ac)$ are symmetric with
distinct points of symmetry.
\begin{lemma}\label{lem:points_symmetry}
We have
\begin{enumerate}[\rm (i)]
\item $\gamma^k(ab) = aca\,p_{k-2}\,aca\,p_{k-1}$ is a symmetric word with
a point of symmetry at $|p_{k-2}| + 5$ for all $k\geq 2$,
\item $\gamma^k(ac) = aca\,p_{k-1}$ is a symmetric word with
a point of symmetry at $2$ for all $k\geq 1$,
\end{enumerate}
\end{lemma}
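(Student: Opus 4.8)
The plan is to isolate a single recursive identity that drives both parts and then to read off the symmetry points from Lemma~\ref{lem:basicpointofsymmetry}. The crucial observation is what I will call an \emph{engine} identity: for every $j\geq 0$,
\[
\gamma(aca\,p_j) = aca\,p_{j+1}.
\]
This follows in one line, since $\gamma(a)=aca$ and $p_{j+1}=\gamma(ca\,p_j)$ give $\gamma(aca\,p_j)=\gamma(a)\,\gamma(ca\,p_j)=aca\,p_{j+1}$ (the base instance $p_1=\gamma(ca\,p_0)=bacacab$ being a direct check). Iterating yields $\gamma^m(aca\,p_j)=aca\,p_{j+m}$ for all $m\geq 0$ and all $j\geq 0$.

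From the engine I would derive the two word identities directly, avoiding separate inductions. For part~(ii), the base case $\gamma(ac)=acab=aca\,p_0$ gives $\gamma^k(ac)=\gamma^{k-1}(aca\,p_0)=aca\,p_{k-1}$ for all $k\geq 1$. For part~(i), a short computation gives the base case $\gamma^2(ab)=aca\,p_0\,aca\,p_1$, which I read as the concatenation $(aca\,p_0)(aca\,p_1)$; applying $\gamma^{k-2}$ and distributing over the two blocks yields $\gamma^k(ab)=\gamma^{k-2}(aca\,p_0)\,\gamma^{k-2}(aca\,p_1)=aca\,p_{k-2}\,aca\,p_{k-1}$ for all $k\geq 2$.

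With the explicit factored forms in hand, the symmetry points follow from Lemma~\ref{lem:basicpointofsymmetry}: a factorization $w=PQ$ into palindromes produces a point of symmetry at $|P|-1$. For part~(ii), the word $aca\,p_{k-1}$ is the product of the palindrome $aca$ of length $3$ and the palindrome $p_{k-1}$ (Lemma~\ref{lem:pk}), so its point of symmetry is $3-1=2$. For part~(i), the key remark is that $aca\,p_{k-2}\,aca$ is itself a palindrome, being a symmetric concatenation of the palindromes $aca$, $p_{k-2}$ and $aca$; hence $\gamma^k(ab)=(aca\,p_{k-2}\,aca)\,p_{k-1}$ is a product of two palindromes whose first factor has length $|p_{k-2}|+6$, yielding a point of symmetry at $|p_{k-2}|+6-1=|p_{k-2}|+5$.

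No step here is genuinely difficult; the only care needed is the bookkeeping of base cases and index ranges, the engine being available from $p_0$ onward so that (ii) holds for $k\geq 1$ and (i) for $k\geq 2$, exactly as stated. The point I would emphasize is why the explicit factored forms are worth recording rather than mere length counts: it is precisely the manifest palindrome blocks $aca\,p_{k-2}\,aca$ and $p_{k-1}$ that make Lemma~\ref{lem:basicpointofsymmetry} applicable, and the resulting symmetry points $2$ and $|p_{k-2}|+5$ are distinct on the common range $k\geq 2$ (since $|p_{k-2}|\geq 1$), which is exactly the discrepancy the subsequent argument will exploit to forbid a common symmetry point as required by class~$\P$.
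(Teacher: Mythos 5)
Your proposal is correct and follows essentially the same route as the paper: the ``engine'' identity $\gamma(aca\,p_j)=aca\,p_{j+1}$ is exactly the computation the paper performs in its inductive steps, merely packaged as a single reusable identity and iterated from the base cases $\gamma(ac)=aca\,p_0$ and $\gamma^2(ab)=aca\,p_0\,aca\,p_1$. You additionally make explicit the deduction of the symmetry points from Lemma~\ref{lem:basicpointofsymmetry} and the palindromicity of $aca\,p_{k-2}\,aca$, a step the paper's proof leaves implicit.
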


\begin{proof}
(i) We have $\gamma^2(ab)=aca\,b\,aca\,bacacab=aca\,p_0\,aca\,p_1$.  Suppose
$\gamma^k(ab) = aca\,p_{k-2}\,aca\,p_{k-1}$ for some $k\geq2$.  Then
\[
\gamma^{k+1}(ab) = \gamma(\gamma^k(ab))
= \gamma(aca\,p_{k-2}\,aca\,p_{k-1})
= \gamma(a) \gamma(cap_{k-2})\gamma(a) \gamma(cap_{k-1})
= aca\, p_{k-1}\, aca\, p_{k}.
\]

(ii)
We verify $\gamma(ac)=acab=aca\,p_0$.
Suppose $\gamma^k(ac) = aca\,p_{k-1}$ for some $k\geq 1$.
Then
\[
\gamma^{k+1}(ac) = \gamma(\gamma^k(ac))
= \gamma(aca\,p_{k-1})
= \gamma(a) \gamma(cap_{k-1})
= aca\, p_{k}.\qedhere
\]
\end{proof}

\noindent
We need also the property that the words $\gamma^k(ab)$ and
$\gamma^k(ac)$ are primitive. This is a consequence of the more general
following result showing that $\mu$ preserves primitivity as well as $\gamma$
under some more conditions.
\begin{lemma}\label{lem:primitivity}
We have
\begin{enumerate}[\rm (i)]
\item for all $w\in\{x,y\}^*$, $\mu(w)$ is primitive if and only if $w$ is primitive,
\item for all $w\in\{ab|ac\}^*$, $\gamma(w)$ is primitive if and only if $w$ is primitive,
\end{enumerate}
\end{lemma}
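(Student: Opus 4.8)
The plan is to split each biconditional into its two directions and to observe that one of them is immediate for any nonerasing morphism, so that all the content sits in the other. Indeed, if $w=u^{p}$ with $p\geq 2$ and $u\neq\varepsilon$, then $\mu(w)=\mu(u)^{p}$ and $\gamma(w)=\gamma(u)^{p}$; as $\mu$ and $\gamma$ are nonerasing, $\mu(u)$ and $\gamma(u)$ are nonempty and strictly shorter than their respective images, so these images are themselves proper powers. Hence in both (i) and (ii) the primitivity of the image already forces the primitivity of $w$, and it remains only to prove the converse: whenever the image is a proper power, $w$ is a proper power too.

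For (i), the device I would use is a \emph{marker letter}. In $\mu(x)=xy$ and $\mu(y)=xxy$ the letter $y$ occurs exactly once and in final position, so in any word $\mu(w)$ the occurrences of $y$ are precisely the right ends of the $\mu$-blocks, and the factorization of $\mu(w)$ over the code $\{xy,xxy\}$ is recovered simply by cutting right after each $y$. Now suppose $\mu(w)=z^{n}$ with $n\geq 2$. Since $z^{n}$ ends with $z$ and $\mu(w)$ ends with $y$, the word $z$ ends with $y$; consequently each of the $n$ successive copies of $z$ inside $\mu(w)$ terminates at an occurrence of $y$, i.e. at a block boundary. No block straddles such a boundary, so each copy of $z$ is a concatenation of whole blocks; thus $z\in\{xy,xxy\}^{*}$, and by injectivity of $\mu$ (the set $\{xy,xxy\}$ being a prefix code) there is a unique $u$ with $z=\mu(u)$. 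Then $\mu(w)=\mu(u)^{n}=\mu(u^{n})$, and injectivity gives $w=u^{n}$, so $w$ is not primitive.

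For (ii), I would route through the relation $\gamma\circ\pi=\pi\circ\mu$ of Lemma~\ref{lem:corresondancexw}. Since $\{ab,ac\}=\pi(\{x,y\})$ is a code, any $w\in\{ab,ac\}^{*}$ can be written uniquely as $w=\pi(v)$ with $v\in\{x,y\}^{*}$, and then $\gamma(w)=\pi(\mu(v))$. The very same marker argument applies to $\pi$, this time with the letter $a$, which occurs once and in initial position in each of $\pi(x)=ac$ and $\pi(y)=ab$; hence $\pi$ reflects and preserves primitivity. Chaining the equivalences, $\gamma(w)=\pi(\mu(v))$ is primitive iff $\mu(v)$ is primitive iff (by (i)) $v$ is primitive iff $w=\pi(v)$ is primitive. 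One may equally well bypass $\pi$ and apply the marker argument directly to $\gamma(ab)=acacab$ and $\gamma(ac)=acab$, whose marker is the single final letter $b$.

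The only genuinely substantive step is the synchronization claim that the repeating unit $z$ respects the block boundaries, and this is exactly what the single-occurrence marker letter makes automatic: periodicity of $z^{n}$ forces the boundaries between the copies of $z$ onto marker positions, which are block boundaries. Without such a marker one would instead have to exclude a phase-shifted refactorization of $z^{n}$ over the code, a step that in general calls for a Fine--Wilf or circular-code argument; here that difficulty evaporates, which is why I expect the write-up to be short.
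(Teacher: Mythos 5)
Your proposal is correct and follows essentially the same route as the paper: the easy direction is the image of a power, and the converse desubstitutes the periodic root $z$ by observing that its final (or initial) marker letter forces the copies of $z$ to align with block boundaries, after which injectivity of the code gives $w=u^{n}$. Your version merely makes the synchronization step more explicit than the paper does, and for (ii) adds the (equivalent) detour through $\gamma\circ\pi=\pi\circ\mu$ while noting the direct argument on $\gamma(ab)=acacab$, $\gamma(ac)=acab$ also works.
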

\begin{proof}
(i) $(\implies)$ If $w$ is not primitive, it can be written as $w=u^p$ for
some word $u$ and integer $p\geq2$. Therefore, $\mu(w)=\mu(u^p)=\mu(u)^p$ is
not primitive.
$(\impliedby)$ If $\mu(w)$ is not primitive, there exists a word $u$ and an
integer $p\geq2$ such that $\mu(w)=u^p$. Then $u$ starts with the letter $x$
and ends with letter $y$. Therefore, $u$ can be desubstituted uniquely as
$u=\mu(v)$ for some word $v$. We get $\mu(w)=u^p=\mu(v)^p=\mu(v^p)$. From the
fact that the letter images of $\mu$ forms a prefix code, we get that $w=v^p$
and $w$ is not primitive.

(ii) $(\implies)$ The proof is the same as in (i).
$(\impliedby)$ If $\gamma(w)$ is not primitive, there exists a word $u$ and an
integer $p\geq2$ such that $\gamma(w)=u^p$. The last letter of $u$ must be $b$
and the first letter of $u$ must be $a$. Hence, $u$ can be written as the
image under $\gamma$ of some word $v$. The word $v$ is unique because the
images of letters by $\gamma$ forms a prefix code. We get
$\gamma(w)=u^p=\gamma(v)^p=\gamma(v^p)$ and thus $w=v^p$ because $\gamma$
forms a code. Then, $w$ is not primitive.
\end{proof}

\noindent
We now gathered enough information to prove the main result.
\begin{proposition}\label{prop:noconjuclassP}
If $\varphi\in\Stab(\bx_\gamma)$ and $\varphi\neq\Id$, then $\varphi$ does not
have a conjugate in class $\P$.
\end{proposition}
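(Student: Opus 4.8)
The plan is to argue by contradiction, exploiting the fact that a class~$\P$ conjugate would force the two block images $\varphi(ac)$ and $\varphi(ab)$ to acquire one and the \emph{same} point of symmetry, which collides with the two distinct points recorded in Lemma~\ref{lem:points_symmetry}. First I would invoke Proposition~\ref{prop:stab} to fix an integer $k$ with $\varphi(ac)=\gamma^k(ac)$ and $\varphi(ab)=\gamma^k(ab)$; since $\varphi\neq\Id$, either $\varphi(a)=\emptyword$ (an erasing map) or $k\geq 1$. The erasing case I would dispose of first: a class~$\P$ conjugate $\varphi'$ would satisfy $\varphi'(a)=\emptyword$, forcing the common palindrome $p=\emptyword$ and hence $\varphi'(b)=q_b$ a palindrome, so that $\gamma^k(ab)=\varphi(b)$ is conjugate to $q_b$. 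By Lemma~\ref{lem:conjugatesymmetric} its point of symmetry would then be congruent to $|q_b|-1\pmod 2$, contradicting that its unique point of symmetry $|p_{k-2}|+5$ (Lemmas~\ref{lem:points_symmetry},~\ref{lem:primitivity},~\ref{lem:twopoints}) has the opposite parity, as $M:=|\gamma^k(ab)|$ and $|p_{k-2}|+5$ differ by the odd number $|p_{k-1}|$.

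So assume $k\geq 1$ and that $\varphi$ has a conjugate $\varphi'$ in class~$\P$, say $\varphi'(\alpha)=p\,q_\alpha$ with $p$ and all $q_\alpha$ palindromes, and let $w$ be the conjugating word, $\varphi(\alpha)w=w\varphi'(\alpha)$ for all $\alpha$ (the reverse direction is symmetric). The crucial structural observation is that
\[
\varphi'(ac)=p\,q_a\,p\,q_c=(p\,q_a\,p)\,q_c,\qquad \varphi'(ab)=p\,q_a\,p\,q_b=(p\,q_a\,p)\,q_b,
\]
where $p\,q_a\,p$ is itself a palindrome. Hence both images share the \emph{same} palindromic prefix $p\,q_a\,p$, so by Lemma~\ref{lem:basicpointofsymmetry} they carry one and the same point of symmetry $\sigma=2|p|+|q_a|-1$, with $\sigma<N:=|\gamma^k(ac)|$ since $p\,q_a\,p$ is a prefix of $\varphi'(ac)$.

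Next I would connect $\sigma$ to the images of $\gamma^k$. Reading $\varphi(\alpha)w=w\varphi'(\alpha)$ on the blocks gives $\gamma^k(ac)\,w=w\,\varphi'(ac)$ and $\gamma^k(ab)\,w=w\,\varphi'(ab)$, so $\varphi'(ac)$ and $\varphi'(ab)$ are conjugates, through the \emph{same} word $w$ of length $r$, of $\gamma^k(ac)$ and $\gamma^k(ab)$. Being a common prefix of the images, $w$ has $r\leq|\varphi(a)|<N$, which is what lets me read both rotations without wrap‑around. The two target words are primitive (Lemma~\ref{lem:primitivity}), hence carry the \emph{unique} points of symmetry $2$ and $|p_{k-2}|+5$ (Lemmas~\ref{lem:points_symmetry},~\ref{lem:twopoints}), so Lemma~\ref{lem:conjugatesymmetric} yields, with $N=|\gamma^k(ac)|$ and $M=|\gamma^k(ab)|$,
\[
\sigma\equiv 2-2r \pmod N,\qquad \sigma\equiv |p_{k-2}|+5-2r \pmod M .
\]

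Eliminating $r$ and $\sigma$ and using $M-N=|p_{k-2}|+3$, I would obtain $(i+1)N=(j+1)M$ for integers $j\geq 0$ and $i\in\{0,1,2\}$ (the bound on $i$ coming from $\sigma<N$ and $r<N$). The finish is arithmetic: from $N<M<2N$ one gets $i>j$, leaving only $(i,j)\in\{(1,0),(2,0),(2,1)\}$; the first two are impossible because $M/N$ is not an integer, and the last because $3N=2M$ would force $|p_{k-1}|=2|p_{k-2}|+3$, contradicting the recurrence $|p_{k-1}|=2|p_{k-2}|+|p_{k-3}|+6$ read off from Lemma~\ref{lem:pk}(i). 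I expect the main obstacle to be precisely the structural step of the second paragraph — recognizing that a class~$\P$ conjugate makes $\varphi(ac)$ and $\varphi(ab)$ share the palindromic prefix $p\,q_a\,p$ and thus a single point of symmetry — together with the careful tracking of the two symmetry points through the common conjugator $w$; once these are in place, primitivity (via Lemma~\ref{lem:twopoints}) and the length recurrence make the arithmetic collapse.
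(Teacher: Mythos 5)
Your overall strategy is the paper's: reduce via Proposition~\ref{prop:stab} to $\varphi(ac)=\gamma^k(ac)$ and $\varphi(ab)=\gamma^k(ab)$, observe that a class~$\P$ conjugate forces the two block images to carry one \emph{common} point of symmetry, and collide this with the distinct unique points of symmetry $2$ and $|p_{k-2}|+5$ supplied by Lemmas~\ref{lem:points_symmetry}, \ref{lem:primitivity} and~\ref{lem:twopoints}. The paper's endgame is different and lighter: it argues that either $\varphi(ab)$ or $\varphi(ac)$ would acquire two distinct points of symmetry (contradicting primitivity via Lemma~\ref{lem:twopoints}), or else Lemma~\ref{lem:pk}(iii) is violated; it never needs your Diophantine elimination. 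Your execution, however, has genuine gaps.

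The central one is the bound $r=|w|<N$. Your justification is wrong: from $\varphi(\alpha)w=w\varphi'(\alpha)$ one only gets that $w$ is a prefix of $\varphi(\alpha)^{\omega}$, not of $\varphi(\alpha)$, so nothing a priori gives $r\le|\varphi(a)|$. This is not cosmetic: without a bound on $r$ the integer $i$ is unbounded, and the system $\sigma\equiv 2-2r\pmod N$, $\sigma\equiv|p_{k-2}|+5-2r\pmod M$ is \emph{always} solvable, since the compatibility condition is $\gcd(M,N)\mid(|p_{k-2}|+3)=M-N$, which holds automatically. The whole contradiction therefore rests on a step you have not proved. (It is repairable: $w$ is a common prefix of $\varphi(a)^{\omega}$ and $\varphi(c)^{\omega}$, so Fine--Wilf plus primitivity of $\gamma^k(ac)$ forces $|w|<|\varphi(a)|+|\varphi(c)|\le N$; but that argument must be made.) Second, ``the reverse direction is symmetric'' is false: with $\varphi'(\alpha)w=w\varphi(\alpha)$ the congruences become $\sigma\equiv2+2r\pmod N$ and $\sigma\equiv|p_{k-2}|+5+2r\pmod M$, and the elimination then admits the consistent solution $i=j=-1$ (i.e.\ $\sigma=2r+2-N$), so the arithmetic does not close there; an extra argument such as the paper's appeal to Lemma~\ref{lem:pk}(iii) is needed. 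Finally, two smaller points: your main argument uses Lemma~\ref{lem:points_symmetry}(i), valid only for $k\ge2$, so the $k=1$ morphisms $\gamma_1,\gamma_2,\gamma_3$ still require the direct check the paper performs; and in the erasing case your parity claim is garbled --- $M$ and $|p_{k-2}|+5$ differ by $|p_{k-1}|+1$, which is \emph{even}, not by the odd number $|p_{k-1}|$ (the contradiction survives, but not for the reason you state).
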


\begin{proof}
Let $\varphi\in\Stab(\bx_\gamma)$.
From Proposition~\ref{prop:stab}, we have that $\varphi(ab)=\gamma^k(ab)$ and
$\varphi(ac)=\gamma^k(ac)$ for some integer $k$.
We may suppose that $k\geq1$ since $\varphi$ is not the identity.
Suppose by contradiction that $\varphi$ has a conjugate in class $\P$.
We may assume that $k\geq2$ since we may check that none of
$\gamma_0$, $\gamma_1$, $\gamma_2$, $\gamma_3$
defined above have a conjugate in class $\P$.
This means that there exists a word $w$ and palindromes $q$, $p_a$, $p_b$ and
$p_c$ such that
\[
\begin{array}{l}
w\,p_aq = \varphi(a)\,w,\\
w\,p_bq = \varphi(b)\,w,\\
w\,p_cq = \varphi(c)\,w,
\end{array}
\quad\quad
\text{or}
\quad\quad
\begin{array}{l}
p_aq\,w = w\,\varphi(a),\\
p_bq\,w = w\,\varphi(b),\\
p_cq\,w = w\,\varphi(c).
\end{array}
\]
Therefore $\varphi(ab)$ and $\varphi(ac)$ are symmetric word with the same
axis of symmetry. Indeed, from the equations above we get that
$\varphi(ab)$ and $\varphi(ac)$ are conjugate to symmetric words having a
point of symmetry at $|p_a|-1$ (Lemma~\ref{lem:basicpointofsymmetry}):
\[
\begin{array}{c}
\varphi(ab)\cdot w = w\cdot p_aqp_bq,\\
\varphi(ac)\cdot w = w\cdot p_aqp_cq.
\end{array}
\quad\quad
\text{or}
\quad\quad
\begin{array}{c}
w\cdot \varphi(ab) = p_aqp_bq\cdot w,\\
w\cdot \varphi(ac) = p_aqp_cq\cdot w.
\end{array}
\]
From Lemma~\ref{lem:conjugatesymmetric}, $\varphi(ab)$ and $\varphi(ac)$ both
have a point of symmetry at $A= |p_a|-1+2|w|$ or at $A=|p_a|-1-2|w|$.
From Lemma~\ref{lem:points_symmetry}, $\varphi(ab)$ already has a point of
symmetry at $|p_{k-2}|+5$. If $A\neq|p_{k-2}|+5$,
then $\varphi(ab)$ has two distinct points of symmetry.  Then, $\varphi(ab)$ is
periodic with period $g$ where $g$ is a divisor of $|\varphi(ab)|$
(Lemma~\ref{lem:twopoints}). Therefore,
$\varphi(ab)$ is not primitive which is a contradiction.
From Lemma~\ref{lem:primitivity}, $\varphi(ab)$ is
primitive because from the beginning $ab$ is a primitive word.

If $A=|p_{k-2}|+5$, then $2$ and $A$ are two distinct points of symmetry of
$\varphi(ac)$. We can get a contradiction using a primitivity argument as
above, but also using an argument based on palindromes. We have
\[
\varphi(ac)=aca\,p_{k-1}=aca\,p_{k-2}\,aca\,p_{k-3}\,aca\,p_{k-2}
\]
from Lemma~\ref{lem:points_symmetry}. A point of symmetry at $A=|p_{k-2}|+5$ implies that
$p_{k-3}acap_{k-2}$ is a palindrome which is a contradiction with
Lemma~\ref{lem:pk} (iii).  We conclude that none of the conjugate of
$\varphi$ are in class~$\P$.
\end{proof}

\begin{proof}[Proof of Theorem~\ref{thm:main}]
Follows from Lemma~\ref{lem:infty} and Proposition~\ref{prop:noconjuclassP}.
\end{proof}

\section{Conclusion}

We have seen that the fixed point $\bx_\gamma$ is not rigid. It is still open
to show that its stabilizer is not finitely generated.

The characteristic polynomial of the incidence matrix of $\gamma$ is not
irreducible as it factorizes as $(x - 1) \cdot (x^{2} - 2x - 1)$.
Note that $x^{2} - 2x - 1$ is the characteristic polynomial of the incidence
matrix of $\mu$. It is still an open question whether there exists a
counterexample to Conjecture~\ref{conj:hks_corrige} such that the
characteristic polynomial is irreducible, or in other words if there exists a
counterexample that can not be expressed as a non trivial morphic word.

Note that $\gamma_0:a\mapsto\emptyword, b\mapsto acacab, c\mapsto acab$ is
almost in class $\P$. This leads to think that the question of Hof,
Knill and Simon could be fixed once more by including erasing morphisms such
that their non erasing part has a conjugate in class $\P$. More investigations
need to be done here to find the new proper statement of the original question
of Hof, Knill and Simon.

\bibliographystyle{alpha}
\bibliography{biblio}

\end{document}